\theoremstyle:=definition,remark,plain\do{%
        \expandafter\g@addto@macro\csname th@\theoremstyle\endcsname{%
            \addtolength\thm@preskip\parskip
            }%
        }
\definecolor{dnrbl}{rgb}{0,0,0.3}
\definecolor{dnrgr}{rgb}{0,0.3,0}
\definecolor{dnrre}{rgb}{0.5,0,0}
\newcolumntype{K}[1]{>{\centering\arraybackslash}p{#1}}
\newcolumntype{L}[1]{>{\arraybackslash}p{#1}}
\newcommand{\TTast}{\mathcal{T}^{\ast}} 
\newcommand{\TT}{\mathcal{T}}
\theoremstyle{plain}
\newtheorem{thm}{Theorem}[section]
\newtheorem{prop}[thm]{Proposition}
\newtheorem{lem}[thm]{Lemma}
\newtheorem{coro}[thm]{Corollary}
\newtheorem{defi}[thm]{Definition}
\numberwithin{equation}{subsection}
\let\c@table\c@figure
\newcommand{\Nat}{\mathbb{N}}
\newcommand{\restr}{\upharpoonright}  
\newcommand{\un}{\uparrow} 
\newcommand{\de}{\downarrow} 
\DeclarePairedDelimiter{\ceil}{\lceil}{\rceil}
\newcommand{\smo}[1]{\mathop{\bf o}\/\left({#1}\right)}
\newcommand{\ml}{Martin-L\"{o}f }
\newcommand{\pz}{$\Pi^0_1$\ }
\newcommand{\eg}{e.g.\ }
\newcommand{\ie}{i.e.\ }
\newcommand{\ce}{c.e.\ }
\newcommand{\pf}{prefix-free }
\renewenvironment{abstract}
 { \normalsize
  \list{}{
    \setlength{\leftmargin}{.0cm}%
    \setlength{\rightmargin}{\leftmargin}%
    }%
  \item {\bf \abstractname.} \relax}
 {\endlist}
\newcommand{\PP}{\mathcal{P}}
\newcommand{\KG}{Ku\v{c}era-G\'{a}cs }
\title{Limits of the \KG coding method\thanks{Barmpalias was supported by the 
1000 Talents Program for Young Scholars from the Chinese Government,
and the Chinese Academy of Sciences (CAS) President's International 
Fellowship Initiative No. 2010Y2GB03.
Additional support was received by
the CAS and the Institute of Software of the CAS.
Partial support was also received from a Marsden grant of New Zealand 
and the China Basic Research Program (973) grant No.~2014CB340302.}}
\author{George Barmpalias  \and Andrew Lewis-Pye}
\date{\today}
\begin{document}
\maketitle
\begin{abstract}
Every real is computable from a \ml random real. 
This well known result in algorithmic randomness was proved by
Ku\v{c}era \cite{MR820784} and G{\'a}cs \cite{MR859105}.
In this survey article we discuss various approaches to the problem of coding
an arbitrary real into a \ml random real, and also describe new results concerning optimal methods of coding.  We start with a simple presentation of 
the original methods
of Ku\v{c}era and G{\'a}cs and 
then  rigorously 
demonstrate their limitations in terms of the size of the redundancy in the codes
that they produce. Armed with a deeper understanding of these methods, we then proceed
to motivate  and illustrate aspects of the new coding method that was 
recently introduced by
Barmpalias and Lewis-Pye in \cite{optcod} and which achieves optimal logarithmic redundancy,
an exponential improvement over the original redundancy bounds.
%
 \end{abstract}
\vfill 
\noindent{\bf George Barmpalias}\\
\noindent State Key Lab of Computer Science, 
Institute of Software, Chinese Academy of Sciences, Beijing, China.
School of Mathematics, Statistics and Operations Research,
Victoria University of Wellington, New Zealand. \\
\textit{E-mail:} \texttt{\textcolor{dnrgr}{barmpalias@gmail.com}}.
\textit{Web:} \texttt{\href{http://barmpalias.net}{http://barmpalias.net}}
\vfill
\noindent{\bf Andrew Lewis-Pye}\\
\noindent Department of Mathematics,
Columbia House, London School of Economics, 
Houghton St., London, WC2A 2AE, United Kingdom. \\
\textit{E-mail:} \texttt{\textcolor{dnrgr}{A.Lewis7@lse.ac.uk.}}
\textit{Web:} \texttt{\textcolor{dnrre}{http://aemlewis.co.uk}} 
 \vfill\thispagestyle{empty}
\clearpage
\newcommand{\upfm}{universal prefix-free machine }
\newcommand{\pfn}{prefix-free}
\newcommand{\upfmn}{universal prefix-free machine}
\newcommand{\lhs}{left-hand-side }
\newcommand{\rhs}{right-hand-side }
\newcommand{\nua}{\nu_{\ast}}
\newcommand{\sz}{$\Sigma^0_1$ }
\newcommand{\BHMN}{Bienvenu, H{\"{o}}lzl, Miller and Nies }
\newcommand{\BGKNT}{ Bienvenu, Greenberg, Ku\v{c}era, Nies and Turetsky }
\newcommand{\CDFT}{Chalcraft, Dougherty, Freiling, and Teutsch }
\newcommand{\BDM}{Barmpalias, Downey, and McInerney }

\section{Introduction}
Information means structure and regularity, while
randomness means the lack of structure and regularity.
One can formalize and even quantify this intuition in the context of
algorithmic randomness and Kolmogorov complexity, where the interplay between
information and randomness has been a principal driving 
force for much of the
research. 
\begin{equation}\label{vp1ODAZwOY}
\parbox{12cm}{How much information can be coded into a random binary sequence?}
\end{equation}
This question has various answers, depending on how it is formalized, but as we are going to see
in the following discussion, for sufficiently strong randomness the answer is `not much'.

\subsection{Finite information}
In the case of a finite binary sequence (string) $\sigma$,  let $K(\sigma)$ denote the
\pf complexity of $\sigma$. Then $\sigma$ is  $c$-incompressible
if $K(\sigma)\geq |\sigma|-c$. Here we view the underlying optimal universal \pf machine $U$
as a {\em decompressor} or {\em decoder}, which takes a string/program $\tau$ and may
output another string $\sigma$, in which case $\tau$ is regarded as a description of $\sigma$.
Then $K(\sigma)$ is the length of the shortest description of $\sigma$ and 
the random strings are the $c$-incompressible strings for some $c$, which is known
as the {\em randomness deficiency}. It is well known that the shortest description of a string
is random, \ie 
there exists a constant $c$ 
such that each shortest description 
is $c$-incompressible.
In other words,
\begin{equation}\label{dp6vdN9peL}
\parbox{11.5cm}{every string $\sigma$ 
can be coded into a random string (its shortest description),
of length the Kolmogorov complexity of $\sigma$}
\end{equation}
which may seem as a strong positive answer to Question
\eqref{vp1ODAZwOY}, in the sense that every 
string $\sigma$ can be coded into a random string. The following proposition, however, points in the opposite direction: 
\begin{prop}[Folklore]\label{369JUGh32W}
If $U$ is an optimal universal \pf machine
then there exists a constant $c$ such that $U(\sigma)\un$ for all strings $\sigma$ such that
$K(\sigma)\geq |\sigma|+c$.\footnote{The proof of this fact is based on the idea that
each string in the domain of $U$ is a \pf description of itself (modulo some fixed overhead).
In other words, if $U(\sigma)\de$ then $\sigma$ can be used to describe itself, with respect to
some \pf machine that is then simulated by $U$, producing a $U$-description of $\sigma$ of
length $|\sigma|+c$ for some constant $c$.}
\end{prop}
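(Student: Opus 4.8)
The plan is to build an auxiliary prefix-free machine $M$ which, on input $\sigma$, simply outputs $\sigma$ itself; that is, $M(\sigma)\de$ with $M(\sigma)=\sigma$ for every string $\sigma$ in a suitable prefix-free set. The obvious obstacle is that the set of \emph{all} strings is not prefix-free, so $M$ cannot be the identity on $2^{<\omega}$. The point, however, is that we only need $M$ to act as the identity on the strings we care about, namely the strings that lie in the domain of $U$ — and $\mathrm{dom}(U)$ \emph{is} prefix-free, precisely because $U$ is a prefix-free machine. So the first step is to let $M$ be the machine that, on input $\sigma$, runs $U(\sigma)$ and, if (and only if) that computation halts, outputs $\sigma$. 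Then $\mathrm{dom}(M)=\mathrm{dom}(U)$ is prefix-free, so $M$ is a legitimate prefix-free machine, and $M(\sigma)=\sigma$ whenever $U(\sigma)\de$.

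The second step is the standard optimality/universality argument. Since $U$ is an optimal universal prefix-free machine, there is a coding constant $d$ (depending only on $M$, hence only on $U$) such that $K_U(\tau)\le K_M(\tau)+d$ for all $\tau$. Now take any $\sigma$ with $U(\sigma)\de$. Then $M(\sigma)=\sigma$ shows $K_M(\sigma)\le|\sigma|$, and therefore
\[
K(\sigma)=K_U(\sigma)\le K_M(\sigma)+d\le |\sigma|+d.
\]
Contrapositively, if $K(\sigma)\ge|\sigma|+c$ with $c:=d+1$, then $\sigma\notin\mathrm{dom}(U)$, i.e. $U(\sigma)\un$. This is exactly the statement, with the constant $c$ one more than the coding constant for the self-printing machine $M$.

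The one genuine thing to be careful about — the "main obstacle," such as it is — is the prefix-freeness of $M$: one must resist the temptation to define $M$ as the literal identity function and instead observe that restricting its domain to $\mathrm{dom}(U)$ both keeps it prefix-free and costs nothing, since we only ever invoke the bound $K_M(\sigma)\le|\sigma|$ at strings $\sigma\in\mathrm{dom}(U)$. Everything else — the existence of the coding constant, the translation into a statement about $K$ — is the routine universality machinery, and matches the sketch already given in the footnote to the proposition.
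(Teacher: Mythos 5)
Your proposal is correct and is essentially the paper's own argument: the footnote's sketch is exactly your machine $M$ that copies its input to its output on the domain of $U$ (which is prefix-free), combined with the universality of $U$ to get $K(\sigma)\leq|\sigma|+d$ for all $\sigma$ in the domain, and then the contrapositive. Your attention to why $M$ is prefix-free and the explicit choice $c=d+1$ merely spell out details the paper leaves implicit.
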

Viewing $U$ as a universal decompressor, Proposition 
\eqref{369JUGh32W} says that a sufficiently random string cannot be decoded into anything, which
means that in that sense it does not effectively code any information. According to this fact, Question 
\eqref{vp1ODAZwOY} has a strong negative answer.

\subsection{Bennett's analogy for infinite information}
The notions and issues discussed in the previous section 
have infinitary analogues which concern coding infinite binary sequences ({\em reals})
into random reals.  For sufficiently strong (yet still moderate) 
notions of randomness for reals (such as the randomness  
corresponding to statistical tests or predictions that are definable in
arithmetic with two quantifiers), the answer to Question
\eqref{vp1ODAZwOY} is {\em not much};
such random reals cannot solve the 
halting problem or even compute a complete extension of Peano
Arithmetic. 
Charles Bennett (see \cite{Bennett1988}) asked if Question
\eqref{vp1ODAZwOY} can have a strongly positive answer, just as in the finite case,
for a standard notion of algorithmic randomness such as \ml randomness.
Remarkably, Ku\v{c}era \cite{MR820784} and G{\'a}cs \cite{MR859105} 
gave a positive answer to Bennett's question.
\begin{thm}[\KG theorem]\label{BZG8etR5Er}
Every real is computable from a \ml random real.
\end{thm}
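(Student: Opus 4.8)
The plan is to establish the theorem in the following stronger form: there is a \pzn class $\mathcal{R}$, every member of which is \ml random, with $\mu(\mathcal{R})>0$, such that every real is Turing-reducible to some member of $\mathcal{R}$. The first part is standard: if $\{\mathcal{U}_{n}\}_{n}$ is a universal \ml test and $d$ is chosen with $\mu(\mathcal{U}_{d})\leq 1/2$, then $\mathcal{R}:=2^{\omega}\setminus\mathcal{U}_{d}$ is a \pzn class, every member of it is \ml random, and $\mu(\mathcal{R})\geq 1/2$. So it remains to code an arbitrary real $X$ into a member of $\mathcal{R}$.

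The combinatorial engine is a \emph{bushiness} estimate. For a string $\sigma$ write $\mu(\mathcal{R}\mid\sigma):=2^{|\sigma|}\,\mu(\mathcal{R}\cap[\sigma])$ for the local density of $\mathcal{R}$ above $\sigma$. An averaging argument shows that if $\mu(\mathcal{R}\mid\rho)\geq 2^{-k}$ then, for every length $\ell$, there are at least $2^{\ell-k-1}$ extensions $\tau$ of $\rho$ of length $|\rho|+\ell$ with $\mu(\mathcal{R}\mid\tau)\geq 2^{-(k+1)}$; in particular, as soon as $\ell\geq k+2$ there are at least two such ``heavy'' extensions. The catch — and the feature one must design around — is that $\mu(\mathcal{R}\mid\sigma)$ is only right-c.e., so ``$\sigma$ is heavy'' is a \pzn predicate while ``$\sigma$ is light'' is $\Sigma^{0}_{1}$, hence positively semidecidable.

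Using this, one builds $Y$ by recursion on the bits of $X$. One maintains a nested sequence $\rho_{0}\prec\rho_{1}\prec\cdots$ of strings together with a computable nondecreasing sequence $k_{0}\leq k_{1}\leq\cdots$ so that $\mu(\mathcal{R}\mid\rho_{s})\geq 2^{-k_{s}}$, starting from $\rho_{0}$ empty and $k_{0}=1$. At stage $s$ one fixes a block length $\ell_{s}$ computable from $k_{s}$ (and at least $k_{s}+2$) and, by the bushiness estimate, reserves two heavy extensions $\rho_{s}\tau_{0}$ and $\rho_{s}\tau_{1}$ of length $|\rho_{s}|+\ell_{s}$; then $\rho_{s+1}:=\rho_{s}\tau_{X(s)}$ and $k_{s+1}:=k_{s}+1$. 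Since $\mu(\mathcal{R}\mid\rho_{s})>0$ for every $s$, the sets $[\rho_{s}]\cap\mathcal{R}$ are nonempty, compact and nested, so their intersection is nonempty; but it is contained in $\bigcap_{s}[\rho_{s}]=\{Y\}$ for $Y:=\bigcup_{s}\rho_{s}$, whence $Y\in\mathcal{R}$ and $Y$ is \ml random.

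It remains to arrange $X\leq_{T}Y$, which is where essentially all the difficulty sits. Since the schedule $\ell_{0},\ell_{1},\dots$ is computable, from $Y$ one can recover every $\rho_{s}$ and the block $\tau\in\{\tau_{0},\tau_{1}\}$ used at stage $s$; one must read off $X(s)$ from $\tau$. Because ``light'' is $\Sigma^{0}_{1}$, one \emph{can} confirm in the limit (by waiting until every lexicographically smaller candidate has been enumerated light) that a given block is the lexicographically least heavy extension, so a ``$0$'' is coded by letting $\rho_{s+1}$ be that least heavy extension. The obstacle is coding a ``$1$'' so that the decoder can likewise confirm it: the second least heavy extension will not do, since the decoder can never become certain that the unique string lexicographically below it is genuinely heavy rather than merely not yet seen to be light. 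The fix is an asymmetric, self-delimiting encoding in which a ``$1$'' triggers an extra maneuver leaving a positively verifiable trace in $Y$; making the blocks long enough to accommodate this is exactly what inflates the redundancy of the original method beyond the optimal logarithmic bound, and is precisely the inefficiency that the coding method of \cite{optcod} is built to remove. The hard part, then, is the simultaneous bookkeeping — keeping the schedule $\ell_{s}$ computable, the densities bounded below along $Y$, and the decoding positively confirmable — rather than any single estimate.
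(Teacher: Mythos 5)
Your setup is sound and follows essentially the paper's route: a positive-measure \pz class $\mathcal{R}$ of \ml random reals, the averaging estimate (this is exactly Lemma \ref{Wc6ZwibuvM} in slightly different clothing), and a nested sequence of ``heavy'' extensions whose intersection gives a random $Y$ extending all the coded blocks. But the argument stops precisely where the theorem lives: you never construct the Turing functional witnessing $X\leq_T Y$. You correctly diagnose that the static decoding rule (``output $0$ if the block is the lexicographically least heavy extension, $1$ if it is the second least'') cannot be verified from $Y$, since heaviness is a $\Pi^0_1$ property; but then you appeal to ``an asymmetric, self-delimiting encoding in which a $1$ triggers an extra maneuver leaving a positively verifiable trace'' without defining the maneuver, its cost in block length, the decoding procedure, or a proof that the resulting functional is consistent and total along the constructed $Y$. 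That is not a finishing detail --- it is the crux of the Ku\v{c}era--G\'{a}cs theorem, and as written your proof only shows that some random real extends the chosen blocks, not that $X$ is computable from it.

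The missing idea, as implemented in the paper's proof of Lemma \ref{NrfxnwPq}, is to make the coding positions \emph{dynamic} rather than static: for each extendible $\sigma$ at a coding level one maintains movable markers $w_j(\sigma)[s]$ on extensions that currently look extendible, moving a marker only when its value is enumerated out of the class (a $\Sigma^0_1$ event), and the encoder routes $Y$ through the limit position of the marker indexed by the bit (or block) being coded. Decoding is then a \ce search for any stage at which some marker sits on a prefix of $Y$; this is correct because a marker value that is a prefix of $Y\in P$ can never die and hence never moves, so the index found is the limit index. This treats $0$ and $1$ completely symmetrically and needs no extra maneuver or trace; your ``second least heavy extension'' idea does work once the lexicographic rank is replaced by the marker dynamics. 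Relatedly, your closing diagnosis is off: the super-logarithmic redundancy of the Ku\v{c}era--G\'{a}cs method is not caused by any asymmetry in coding $1$'s, but by the accumulated per-step overheads $\ell_i-m_i$ forced by the condition $\sum_i 2^{m_i-\ell_i}<\infty$ that keeps the densities bounded below (Corollary \ref{AKkRF5WHu}, and Section \ref{eKNwQobQTs} for the matching lower bound); what \cite{optcod} removes is the density assumption, not an asymmetric trick. If you replace the hand-waved ``maneuver'' by the marker construction (or any other concretely verifiable decoding), the rest of your argument goes through.
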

Bennett  \cite{Bennett1988} commented: 
\begin{equation*}
\parbox{14cm}{{\em ``This is the infinite analog of the far more obvious fact that every 
finite string is computable from an algorithmically random string (e.g. its minimal program).''}
}
\end{equation*}
Here we argue that Bennett's suggested analogy between 
\eqref{dp6vdN9peL} and Theorem 
\ref{BZG8etR5Er}
is not precise, in the sense that 
it misses the quantitative aspect of \eqref{dp6vdN9peL} -- namely that
the random code can be chosen short (of length the complexity of the string). 
It is much easier to code $\sigma$ into a random string which is much longer than $\sigma$,
than code it into a random string of length at most $|\sigma|$. The
analogue of  `{\em length of code}' for infinite codes, is the {\em use-function} 
in a purported Turing reduction underlying the computation of a real $X$ from a random real $Y$.
The use function for the reduction is a function $f$ such that for each $n$, the first $n$ bits of $X$ can be uniformly computed
from the first $f(n)$ bits of $Y$. 

\subsection{A quantitative version of the \texorpdfstring{\KG}{KG} theorem?}
The more precise version of Bennett's suggested analogy that we have just discussed
can be summarized in Table \ref{ABuEidRZu}, where $\sigma^{\ast}$ denotes the shortest program for $\sigma$.\footnote{If
there are several shortest strings $\tau$ such that $U(\tau)=\sigma$ then $\sigma^{\ast}$ denotes the one
that converges the fastest.}
So what is the analogue of the code length in the \KG theorem? If we code a real $X$ into a \ml random real
$Y$, how many bits of $Y$ do we need in order to compute the first $n$ bits of $X$? This question has been discussed in the literature
(see below) but, until recently, only very incomplete answers were known. 
Ku\v{c}era \cite{MR820784} did not provide tight calculations
and various textbook presentations of the theorem 
(\eg Nies \cite[Section 3.3]{Ottobook}) estimate the use-function in this 
reduction of $X$ to a \ml random $Y$ to be of the order $n^{2}$. In fact, the actual bound that can be obtained by 
Ku\v{c}era's method is $n\log n$. G\'{a}cs used a more elaborate argument and obtained
the upper bound $n+\sqrt{n}\cdot\log n$, which is $n+\smo{n}$, and the same bound was also obtained later by
Merkle and Mihailovi\'{c} \cite{jsyml/MerkleM04} who used an argument in terms of supermartingales. 

\subsection{Coding into random reals, since \texorpdfstring{Ku\v{c}era and G\'{a}cs}{KG}}
The \KG coding method has been combined with various arguments in order to produce
\ml random reals with specific  computational properties.
The first application already appeared in \cite{Kucera:87}, where a high incomplete \ml random real
computable from the halting problem was constructed.
Downey and Miller \cite{MR2170569} and later
Barmpalias, Downey and Ng \cite{BDNGP} 
presented a variety of different versions of this method, which allow some control over the 
degree of the random real
which is  coded into. 
Doty \cite{cie/Doty06} revisited the \KG theorem from the viewpoint of constructive dimension.
He characterized the asymptotics of the redundancy in computations of an infinite sequence
$X$ from a random oracle in terms of the constructive dimension of $X$.
We should also mention that this is not the only method for coding
into members of a positive measure \pz class (or into the class of \ml random reals).
Barmpalias, Lewis-Pye and Ng \cite{BLNg08} used a different method in order to show that
every degree that computes a complete extension of Peano Arithmetic is the supremum of
two \ml random degrees. 

It is fair to say that all of these methods rely heavily on the density of reals inside a nonempty \pz
class that consists entirely of \ml reals.
This is also true of more recent works such as
\BGKNT \cite{coher}, Day and Miller \cite{daymiller15} and
Miyabe, Nies and Zhang \cite{miyabe15}. Khan \cite{KhanLDP} explicitly studies 
 the properties of density inside \pz classes, not necessarily consisting entirely of \ml random reals.
Much of this work is concerned with lower bounds on the density that a \ml real has inside
every \pz class that contains it. In our analysis of the \KG theorem we isolate the role of density
in the argument and show that, in a sense,  tighter oracle-use in computations from \ml random oracles
is only possible through methods that do not rely on such density requirements. 

\begin{table}
\colorbox{black!5}{\arrayrulecolor{green!50!black} 
  \begin{tabular}{lccc}
{\small\em Notion}\hspace{2cm} &{\small\em Finite} 
&\hspace{0.2cm}  &  {\small\em Infinite}  \\[0.1ex]\cmidrule[0.5pt]{1-4}
{\small Source} &{\small $\sigma$} &	\hspace{0.2cm}  & {\small $X$} \\[1ex]
 {\small Code} &{\small $\sigma^{\ast}$}	& \hspace{0.2cm}  &  {\small $Y$}\\[1ex]
 {\small Code-length} &{\small $|\sigma^{\ast}|$} & \hspace{0.2cm} & {\small $n\mapsto f(n)$}\\[1ex]
 {\small Optimal code} &{\small $K(\sigma)$} & \hspace{0.2cm} & {\small ?}\\[1ex]
\end{tabular}}\centering
\caption{Quantitative analogy between finite and infinite codes; here $n\mapsto f(n)$ refers to an `optimal' non-decreasing upper bound
on the use-function in the computation of $X$ from $Y$.}\label{ABuEidRZu}
\end{table}

\section{Coding into an effectively closed set subject to density requirements}
The arguments of 
Ku\v{c}era and G\'{a}cs both
provide a method for coding  an arbitrary real $X$ into a member of an effectively closed set $P$
(a \pz class),  and rely on certain density requirements for the set of reals $\mathcal{P}$.
The connection to Theorem \ref{BZG8etR5Er} is that
the class of \ml random reals is the effective union of countably many \pz classes of positive measure.
The only difference in the two methods is that Ku\v{c}era codes $X$ one-bit-at-a-time
(with each bit of $X$ coded into a specified segment of $Y$) while G\'{a}cs codes $X$
block-by-block into $Y$, with respect to a specified segmentation of $X$.

\subsection{Overview of the \texorpdfstring{\KG}{KG} argument}

In general, we can code $m_i$ many bits of $X$ at the $i$th coding step, using a block in $Y$ of length $\ell_i$,
as Table \ref{LmWO8NSr8P} indicates. 
We leave the parameters $(m_i), (\ell_i)$ unspecified for now, while in the following it will become clear what the growth of 
this sequence needs to be in order for the argument to work.
Note that  the bit-by-bit version of the coding is
the special case where $m_i=1$ for all $i$. The basic form of the coding process (which we shall elaborate on later) can be outlined as follows. 
\begin{enumerate}[\hspace{0.5cm}(1)]
\item Start with a  \pz class $\mathcal{P}\neq\emptyset$ which only contains (Martin-L\"{o}f) randoms.
\item Choose the {\em length $m_i$ of  the block} coded at step $i$.
\item Choose the length $\ell_i=m_i+g(i)$ used for coding the $i$th block.
\item The {\em oracle-use} for the first $M_n=\sum_{i<n} m_i$ bits is $L_n=\sum_{i< n} \ell_i$.
\item Form a subclass  $\mathcal{P}^{\ast}$ of  $\mathcal{P}$ with the property that
for all but finitely many $n$ and for each $X\in\mathcal{P}^{\ast}$, there are at least $2^{m_n}$ extensions of 
$X\restr_{L_n}$ of length $L_{n+1}$ which have infinite extensions in $\mathcal{P}^{\ast}$.
\item Argue that $\mathcal{P}^{\ast}\neq\emptyset$  \ \ (due to the growth of $(\ell_i)$, relative to $(m_i)$).
\end{enumerate}

A crucial fact here is that if  $\mathcal{P}$ is a \pz class then  $\mathcal{P}^{\ast}$ is also a \pz class.
In Section \ref{n3ozZCJarz} we turn this outline into a modular proof, which makes the required properties of
the parameters $(m_i), (\ell_i)$  transparent. 
We will show that apart from the computability of 
 $(m_i), (\ell_i)$,
the following facts characterize the necessary and sufficient constraints on the two sequences 
for the coding to work.
\begin{enumerate}[(i)]
\item If $\sum_i 2^{m_i-\ell_i}<\infty$
then there exists a \pz class of positive measure that consists entirely 
of \ml random reals such that $\mathcal{P}^{\ast}\neq \emptyset$;
\item If $\sum_i 2^{m_i-\ell_i}=\infty$ and
$\mathcal{P}$ is a \pz class such that 
$\mathcal{P}^{\ast}\neq\emptyset$ then  $\mathcal{P}$ contains a real which is not \ml random.
\end{enumerate}

\begin{table}
\colorbox{black!5}{\arrayrulecolor{green!50!black} 
  \begin{tabular}{ll}
\cmidrule[0.5pt]{1-2}
{\small $m_i$} &{\small Length of the $i$th block of $X$} \\[1ex]
{\small $\ell_i$} &{\small Length of the $i$th block of $Y$} \\[1ex]
{\small $M_n$} &{\small Number of bits of $X$ coded after $n$-many coding steps: $M_n:=\sum_{i<n} m_i$} \\[1ex]
{\small $L_n$} &{\small Length of $Y$ used in the computation of $X\restr_{M_n}$: $L_n:=\sum_{i<n} \ell_i$} \\[1ex]
\cmidrule[0.5pt]{1-2}
\end{tabular}}\centering
\caption{Parameters of the \KG coding of $X$ into $Y$}\label{LmWO8NSr8P}
\end{table}

\subsection{The general \texorpdfstring{\KG}{KG} argument}\label{n3ozZCJarz}
We give a modular argument  in terms of \pz classes, showing that every real is computable from a \ml random real,
and consisting of a few simple lemmas. 
We use Martin-L\"{o}f's paradigm of algorithmic randomness, much like in the original argument of
Ku\v{c}era and G\'{a}cs.\footnote{However our presentation has been significantly assisted by 
Merkle and Mihailovi\'{c}
\cite{jsyml/MerkleM04}, who phrased the argument in terms of martingales.}
In the next definition, recall that for finite $\sigma$, $[\sigma ]$ is the set of all infinite extensions of $\sigma$. 

\begin{defi}[Extension property]\label{kvWs5nRqzo}
Given a \pz class $P$ and sequences $(m_i)$, $(\ell_i)$ of positive integers, let 
$M_n:=\sum_{i<n} m_i$, $L_n:=\sum_{i<n} \ell_i$ and say that
$P$ has the extension property with respect to
$(m_i)$, $(\ell_i)$ if  for each $i$, every string $\sigma$ of length $L_i$ with 
$[\sigma]\cap P\neq\emptyset$ has at least $2^{m_i}$ extensions $\tau$ of length $L_{i+1}$ 
such that $P\cap [\tau]\neq\emptyset$.
\end{defi}

The first lemma says that subject to certain density conditions on a \pz class $P$, 
every real is computable from a member of $P$.

\begin{table}
\colorbox{black!5}{\arrayrulecolor{green!50!black} 
  \begin{tabular}{cl}
\cmidrule[0.5pt]{1-2}
{\small $\ell_i-m_i$} &\hspace{0.3cm}{\small Overhead at the $i$th coding step}\\[1ex]
{\small $\sum_{i<n}(\ell_i-m_i)$} &\hspace{0.3cm}{\small Accumulated overhead after $n$ coding steps}\\[1ex]
{\small $\sum_{i} 2^{m_i-\ell_i}<\infty $} &\hspace{0.3cm}{\small Necessary and sufficient condition for successful coding}\\[1ex]
\cmidrule[0.5pt]{1-2}
\end{tabular}}\centering
\caption{Overheads in the \KG coding of $X$ into $Y$}\label{1pDc2J1qXg}
\end{table}

\begin{lem}[General block coding]\label{NrfxnwPq}
Suppose that $P$ is a \pz class, and $(m_i)$, $(\ell_i)$ are 
computable sequences of positive integers. If $P$ has the extension property with respect to
$(m_i)$, $(\ell_i)$, then 
every sequence is computable from a real in $P$ with use $L_{s+1}$
for bits in $[M_s, M_{s+1})$.
\end{lem}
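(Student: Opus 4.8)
The plan is to construct, alongside a single Turing functional $\Phi$, a prefix-monotone ``coding map'' $D$ that assigns to each string $\rho$ of length $M_n$ a string $D(\rho)$ of length $L_n$, so that $D(\langle\rangle)=\langle\rangle$, $[D(\rho)]\cap P\neq\emptyset$, and, for $|\rho|=M_n$, the $2^{m_n}$ strings $\{D(\rho\,b):b\in 2^{m_n}\}$ are pairwise distinct length-$L_{n+1}$ extensions of $D(\rho)$. Granting $D$, for an arbitrary real $X$ put $Y:=\bigcup_n D(X\restr_{M_n})$; each $[Y\restr_{L_n}]\cap P$ is then nonempty, and since these form a decreasing sequence of closed subsets of the compact space $2^{\omega}$ with $L_n\to\infty$ (the $\ell_i$ are positive), their intersection is exactly $\{Y\}\cap P$, forcing $Y\in P$. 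The functional will reconstruct $X$ from $Y$ block by block: once $X\restr_{M_n}$ — hence $D(X\restr_{M_n})=Y\restr_{L_n}$ — is known, $\Phi$ reads $Y\restr_{L_{n+1}}$ and determines the unique $b\in 2^{m_n}$ with $D(X\restr_{M_n}\cdot b)=Y\restr_{L_{n+1}}$. Only bits of $Y$ below position $L_{n+1}$ are ever consulted in producing the bits of $X$ in $[M_n,M_{n+1})$, which is exactly the asserted use.

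The real content is that the extension property only tells us that, for $\sigma$ of length $L_i$ with $[\sigma]\cap P\neq\emptyset$, the set $G(\sigma)$ of length-$L_{i+1}$ extensions $\tau$ of $\sigma$ with $[\tau]\cap P\neq\emptyset$ has size at least $2^{m_i}$ — and ``$[\tau]\cap P\neq\emptyset$'' is only a $\Pi^0_1$ condition (by compactness it holds iff $[\tau]\cap P_s\neq\emptyset$ for every stage $s$, where $P_0\supseteq P_1\supseteq\cdots$ is the standard clopen approximation with $\bigcap_s P_s=P$). Thus one cannot simply take ``the lexicographically least $2^{m_i}$ members of $G(\sigma)$'' and hope to identify them effectively: the rank of a fixed forever-surviving $\tau$ within the stage-$s$ approximation of $G(\sigma)$ merely decreases to its true rank, with no computable modulus. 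The device that repairs this is a first-come-first-served reservation run along the approximation: maintain a partial injection from the $2^{m_i}$ block values into the currently surviving extensions of $\sigma$; whenever some block value is free and an unassigned surviving extension is available, pair them (scanning extensions in lexicographic order); whenever an extension falls out of some $P_s$, release whatever block value it held. Since at least $2^{m_i}$ extensions survive forever, every block value is eventually paired permanently with a distinct forever-surviving extension; crucially, a paired extension is only ever unpaired when it is eliminated. Define $D(\rho\cdot b)$, for $|\rho|=M_n$, to be the extension permanently paired with $b$ in this process run above $\sigma=D(\rho)$; an easy induction on $n$ (using $[D(\langle\rangle)]\cap P=P\neq\emptyset$ and the extension property) shows $D$ is well defined with the properties listed above.

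It remains to see that $\Phi$ works. On oracle $Z$ and having already produced $X\restr_{M_n}$ with $D(X\restr_{M_n})=Z\restr_{L_n}$, $\Phi$ runs the reservation process above $Z\restr_{L_n}$, reads $Z\restr_{L_{n+1}}$, waits for a stage at which $Z\restr_{L_{n+1}}$ first becomes paired with some block value $b$, and outputs $b$ as the $n$th block of $X$. For $Z=Y$ the string $Z\restr_{L_{n+1}}=D(X\restr_{M_n}\cdot b^{\ast})$ (with $b^{\ast}$ the true $n$th block) survives forever, so it is eventually paired; being never eliminated, it is paired exactly once, with $b^{\ast}$; hence $\Phi$ halts and is correct at this block. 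By induction on $n$, $\Phi^Y\restr_{M_{n+1}}=X\restr_{M_{n+1}}$ for all $n$, and since $M_n\to\infty$ we get $\Phi^Y=X$ with the stated use. (If $P=\emptyset$ there is nothing to prove; otherwise $P\neq\emptyset$ starts the recursion.) I expect the only genuine obstacle to be the effectivity of the decoding just discussed; the compactness argument putting $Y$ into $P$, the computability of $(M_n),(L_n)$, and the verification that the reservation stabilizes are all routine.
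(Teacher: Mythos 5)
Your proposal is correct and follows essentially the same route as the paper: your first-come-first-served reservation process is exactly the paper's dynamic assignment of the variables $w_0(\sigma)[s],\dots,w_{2^{m_i}-1}(\sigma)[s]$ (with release on elimination from $P_s$ and permanence of elimination giving consistency of the functional), and your map $D$ and the real $Y=\bigcup_n D(X\restr_{M_n})$ correspond to the paper's surjectivity argument. The only cosmetic difference is that you make explicit the compactness argument placing $Y$ in $P$, which the paper leaves implicit.
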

\begin{proof}
For any string $\sigma$ of length $L_i$ consider
the variables $w_0(\sigma)[s],\dots w_{2^{m_i}-1}(\sigma)[s]$ for strings, 
which are defined dynamically according to
the approximation $(P_s)$ to $P$ as follows. At stage 0 let
$w_j(\sigma)[0]\un$ for all $j<2^{m_i}$. At stage $s+1$ find the least 
 $t<2^{m_i}$ such that one of the following holds:
\begin{enumerate}[\hspace{0.5cm}(a)]
\item  $w_t(\sigma)[s]\un$;
\item  $w_t(\sigma)[s]\de$ and $[w_t(\sigma)[s]]\cap P_{s+1}=\emptyset$.
\end{enumerate}
In case (a) look for the lexicographically least $\ell_i$-bit extension $\tau$ of $\sigma$
such that $[\tau]\cap P_{s+1}\neq \emptyset$ and
$w_j(\sigma)[s]\not\simeq \tau$ for all $j<2^{m_i}$. If no such exists, terminate
the process (hence let $w_j(\sigma)[n]\simeq w_j(\sigma)[s]$ for all $j<2^{m_i}$ and all $n>s$).
Otherwise define $w_t(\sigma)[s+1]=\tau$  and go to the next stage.
In case (b) let $w_t(\sigma)[s+1]\un$ and go to the next stage.

By the hypothesis of the lemma, for every $i$ and every string $\sigma$
of length $L_i$ such that $[\sigma]\cap P\neq\emptyset$, the words
$w_j(\sigma)[s]$, $j<2^{m_i}$ reach limits $w_j(\sigma)$ after finitely many stages,
such that: 
\begin{itemize}
\item  $j\neq k\Rightarrow w_j(\sigma)\neq w_k(\sigma)$ for all $j,k<2^{m_i}$;
\item  $[w_j(\sigma)]\cap P\neq\emptyset$.
\end{itemize}
Consider the Turing functional $\Phi$ which, given oracle $Y$, works inductively as follows.
Suppose that $\Phi(Y\restr_{L_i})\restr_{M_i}$ has been calculated. The functional then searches for the least pair $(j,s)$
(under a fixed effective ordering of all pairs, of order type $\omega$) 
such that $j<2^{m_i}$, $w_j(Y\restr_{L_i})[s]\de$
and  is a prefix of $Y$. 
For $\tau$ which is the $j$th string of length
$m_i$ (under the lexicographical ordering) 
the functional then defines 
$\Phi(Y\restr_{L_i+\ell_i})=\Phi(Y\restr_{L_i})\ast \tau$.
By  construction $\Phi$ is consistent, and if $\Phi(Y\restr_{L_i})$ is defined it has length $M_i$.
Finally we show that $\Phi$ is onto the Cantor space. Given $X$ we can inductively construct $Y$
such that $\Phi(Y)=X$. Suppose that we have constructed $Y\restr_{L_i}$ such that
$\Phi(Y\restr_{L_i})=X\restr_{M_i}$ and $Y\restr_{L_i}$ is extendible in $P$. Let $\sigma$ be the 
unique string of length $m_i$ such that $X\restr_{M_i}\ast \sigma$ is a prefix of $X$.
Then $w_j(Y\restr_{L_i})$ is defined for all $j<2^{m_i}$ and takes distinct values for different 
$j$. Let $t$ be the index of $\sigma$
in the lexicographical ordering of strings of length $m_i$.
Then let $Y\restr_{L_{i+1}}=w_t(Y\restr_{L_i})$. Clearly
$Y\restr_{L_{i+1}}$ is extendible in $P$ and moreover
$\Phi(Y\restr_{L_{i+1}})=X\restr_{M_{i+1}}$. This completes the induction step
in the construction of $Y$ and shows that
$\Phi(Y)=X$.
\end{proof}

\begin{figure}
\scalebox{0.8}{
\begin{tikzpicture}[
Nnode/.style={rectangle,  minimum size=6mm, very thick,
draw=white!70!black!10, 
top color=white!50!black!10, bottom color=white!50!black!10,
rounded corners,  font=\small },
tnode/.style={rectangle,  minimum size=6mm, rounded corners, very thick,
draw=red!50!black!50, top color=white, bottom color=red!50!black!20, 
 font=\small},
 ttnode/.style={rectangle,  minimum size=6mm, rounded corners, very thick,
draw=yellow!50!black!50, top color=white, bottom color=yellow!50!black!20, 
 font=\small}]
 \node (cr) [Nnode, outer sep=3pt, inner sep=7pt] at (1,1.5) {\parbox{5cm}{Fast-growing overheads $\ell_i-m_i$}};
\node (ivr) [Nnode, outer sep=3pt, inner sep=7pt] at (7,1.5) {\parbox{3.5cm}{Density property in $P$}};
\node (fvr) [Nnode, outer sep=3pt, inner sep=7pt] at (7,0) {\parbox{4cm}{Extension property in $P$}};
\node (svr) [Nnode, outer sep=3pt, inner sep=7pt] at (1,0) {\parbox{3.5cm}{Successful coding in $P$}};

\draw [->] (cr) -- (ivr); 
\draw [->] (ivr) -- (fvr); 
\draw [->] (fvr) -- (svr);
\end{tikzpicture}}
\centering
\caption{{\textrm Diagrammatic representation of the \KG coding argument.}}
\label{fig:dynam2}
\end{figure}
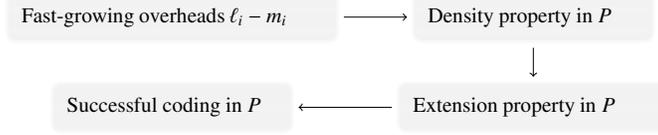

Recall that for $\sigma$ of length $n$,  the $P$-density of $\sigma$  is defined to be $2^n \cdot \mu([\sigma]\cap P)$, where $\mu$ denotes Lebesgue measure on Cantor space.  

\begin{defi}[Density property]
Given  $P$, $(m_i)$, $(\ell_i)$ as in Definition \ref{kvWs5nRqzo}
we say that
$P$ has the density property with respect to
$(m_i)$, $(\ell_i)$ if  
for each $n$, every string of length $L_n$ with 
$[\sigma]\cap P\neq\emptyset$ has $P$-density at least $2^{m_n-\ell_n}$.
\end{defi}

\begin{lem}[Density and extensions]\label{Wc6ZwibuvM}
Given  $P$, $(m_i)$, $(\ell_i)$ as in Definition \ref{kvWs5nRqzo}, if
$P$ has the density property with respect to
$(m_i)$, $(\ell_i)$ then it also has the
extension property with respect to
$(m_i)$, $(\ell_i)$.
\end{lem}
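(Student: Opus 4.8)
The plan is to prove the contrapositive-free direct implication: assuming the density property, deduce the extension property. Fix $i$ and let $\sigma$ be a string of length $L_i$ with $[\sigma]\cap P\neq\emptyset$. I want to show $\sigma$ has at least $2^{m_i}$ extensions $\tau$ of length $L_{i+1}=L_i+\ell_i$ with $[\tau]\cap P\neq\emptyset$. First I would record the elementary measure bound: $\mu([\sigma]\cap P)=\sum_{\tau}\mu([\tau]\cap P)$, where the sum ranges over all $\ell_i$-bit extensions $\tau$ of $\sigma$ (there are $2^{\ell_i}$ of them, partitioning $[\sigma]$), and of course only those $\tau$ with $[\tau]\cap P\neq\emptyset$ contribute. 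For each such contributing $\tau$, trivially $\mu([\tau]\cap P)\leq\mu([\tau])=2^{-L_{i+1}}=2^{-L_i-\ell_i}$.

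Next I would plug in the density hypothesis at level $L_i$: since $[\sigma]\cap P\neq\emptyset$ and $|\sigma|=L_i$, the density property gives $2^{L_i}\cdot\mu([\sigma]\cap P)\geq 2^{m_i-\ell_i}$, i.e. $\mu([\sigma]\cap P)\geq 2^{m_i-\ell_i-L_i}$. Now let $N$ be the number of $\ell_i$-bit extensions $\tau$ of $\sigma$ with $[\tau]\cap P\neq\emptyset$. Combining the two bounds,
\[
2^{m_i-\ell_i-L_i}\ \leq\ \mu([\sigma]\cap P)\ =\ \sum_{\tau:\,[\tau]\cap P\neq\emptyset}\mu([\tau]\cap P)\ \leq\ N\cdot 2^{-L_i-\ell_i},
\]
and rearranging yields $N\geq 2^{m_i}$, which is exactly the extension property at step $i$. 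Since $i$ was arbitrary, we are done.

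There is essentially no hard part here; the statement is a one-line counting argument once the definitions are unwound. The only points requiring a little care are: (a) making sure the index alignment is right — the density property is invoked at strings of length $L_n$ with the bound $2^{m_n-\ell_n}$, and the extension property also concerns strings of length $L_i$ asking for $2^{m_i}$ many $L_{i+1}$-extensions, so the same index $i=n$ is used throughout and no shifting occurs; and (b) noting that extensions $\tau$ with $[\tau]\cap P=\emptyset$ simply contribute $0$ to the measure sum, so discarding them is harmless. One might also remark that the converse direction fails in general (the extension property is genuinely weaker than the density property), which is consistent with the paper later isolating density as the feature whose removal permits tighter oracle-use; but that is not needed for this lemma.
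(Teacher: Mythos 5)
Your proof is correct and follows essentially the same route as the paper: you bound $\mu([\sigma]\cap P)$ above by $N\cdot 2^{-L_{i+1}}$ and below via the density hypothesis, which is exactly the paper's counting argument, only phrased directly rather than by contradiction. The index alignment and the harmlessness of non-extendible $\tau$ are handled correctly, so nothing is missing.
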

\begin{proof}
This follows from the general fact that if 
the $P$-density of $\sigma$ is at least $2^{-t}$ for some $t$,
then given any $m$, there are at least $2^m$ extensions $\tau$ of $\sigma$ of length $|\sigma|+t+m$ such that
$[\tau]\cap P\neq \emptyset$. In order to prove the latter fact, suppose for a contradiction that it is not true.
Then the $P$-density of  $\sigma$ would be at most $(2^{m}-1)\cdot 2^{-m-t}=2^{-t}-2^{-m-t}<2^{-t}$
which contradicts the hypothesis.
\end{proof}

\begin{lem}[Lower bounds on the density]\label{cn1lvqEMtb}
Let $P$ be a \pz class and let $(m_i),(\ell_i)$ be computable sequences of positive integers such that
$\sum_{i} 2^{m_i-\ell_i}<\mu(P)$. Then there exists a \pz class $P^{\ast}\subseteq P$ 
which has the extension property with respect to $(m_i),(\ell_i)$.
\end{lem}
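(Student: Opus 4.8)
The plan is to reduce the statement to the density property and then invoke Lemma~\ref{Wc6ZwibuvM}: it is enough to build a \pz class $P^{\ast}\subseteq P$ such that for every $n$ and every string $\sigma$ of length $L_n$ with $[\sigma]\cap P^{\ast}\neq\emptyset$ we have $\mu([\sigma]\cap P^{\ast})\geq 2^{m_n-L_{n+1}}$ (equivalently, $P^{\ast}$-density at least $2^{m_n-\ell_n}$, since $L_{n+1}=L_n+\ell_n$). The first attempt is to delete from $P$ every ``thin'' cylinder, namely every $[\sigma]$ with $|\sigma|=L_n$ for some $n$ and $\mu([\sigma]\cap P)<2^{m_n-L_{n+1}}$. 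Because $P$ is \pz, the quantity $\mu([\sigma]\cap P)$ is the decreasing limit of computable rationals, so ``$\mu([\sigma]\cap P)<2^{m_n-L_{n+1}}$'' is a $\Sigma^0_1$ condition; hence the union of the deleted cylinders is effectively open, and the remainder is again a \pz class.

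The difficulty is that deleting a long thin cylinder reduces $\mu([\tau]\cap P)$ for the shorter $\tau$ containing it, so a single round of deletions need not leave a class with the density property. I would handle this by iterating. Let $\Gamma$ map a \pz class $Q\subseteq P$ to the \pz class obtained by deleting $[\sigma]$ for every $\sigma$ of length $L_n$ (some $n$) with $\mu([\sigma]\cap Q)<2^{m_n-L_{n+1}}$; this operation is uniformly effective in an index for $Q$, so $\Gamma^{k}(P)$ is uniformly \pz in $k$, the classes $\Gamma^{0}(P)\supseteq\Gamma^{1}(P)\supseteq\cdots$ are nested, and $P^{\ast}:=\bigcap_{k}\Gamma^{k}(P)$ is a \pz subclass of $P$. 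Two facts then remain. (a)~$P^{\ast}$ has positive measure (hence is nonempty): the total deleted measure is bounded by $\sum_{k}\mu\big(\Gamma^{k-1}(P)\setminus\Gamma^{k}(P)\big)$, and each string $\sigma$ of length $L_n$ contributes to this sum at only one stage $k$ --- once $[\sigma]$ has been removed it stays removed --- and then strictly less than $2^{m_n-L_{n+1}}$; summing over the $2^{L_n}$ strings of length $L_n$ and then over $n$ bounds the deleted measure by $\sum_n 2^{L_n}\cdot 2^{m_n-L_{n+1}}=\sum_n 2^{m_n-\ell_n}<\mu(P)$. (b)~$P^{\ast}$ has the density property: if $|\sigma|=L_n$ and $[\sigma]\cap P^{\ast}\neq\emptyset$ then $[\sigma]\cap\Gamma^{k}(P)\neq\emptyset$ for all $k$, and by continuity of measure from above $\mu([\sigma]\cap\Gamma^{k}(P))$ decreases to $\mu([\sigma]\cap P^{\ast})$; if this limit were below $2^{m_n-L_{n+1}}$ then some $\Gamma^{k}(P)$ would already satisfy $\mu([\sigma]\cap\Gamma^{k}(P))<2^{m_n-L_{n+1}}$, so the next application of $\Gamma$ would delete $[\sigma]$, contradicting $[\sigma]\cap P^{\ast}\neq\emptyset$.

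Finally Lemma~\ref{Wc6ZwibuvM} converts the density property of $P^{\ast}$ into the extension property, which is exactly what is claimed. The only genuinely delicate point is the measure accounting in (a): one must charge each deleted cylinder to the level $n$ corresponding to its length and observe that, although $\Gamma$ is applied infinitely often, no cylinder contributes deleted measure twice; the continuity argument in (b) and the effectivity claims are routine.
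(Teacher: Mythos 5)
Your proposal is correct and follows essentially the same route as the paper: both arguments repeatedly delete cylinders $[\sigma]$ of length $L_n$ whose relative density has dropped below $2^{m_n-\ell_n}$, charge each such $\sigma$ at most $2^{m_n-\ell_n-L_n}$ of removed measure (so the total removed is at most $\sum_n 2^{m_n-\ell_n}<\mu(P)$, leaving a nonempty \pz class with the density property), and then invoke Lemma \ref{Wc6ZwibuvM} to pass from density to extension. The only difference is organizational: the paper carries out all deletions in one stage-by-stage enumeration of a $\Sigma^0_1$ class $Q$ (acting on the least string that ``requires attention'' with respect to the current approximation, so that $P^{\ast}=P-Q$ is immediately \pzn), whereas you iterate a deletion operator $\omega$ times and intersect, which is why you need the (routine) uniformity of $\Gamma$ and the continuity-of-measure step to see that $\omega$ iterations already yield the density property.
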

\begin{proof}
We construct a \sz class $Q$ in stages and let $(P_s)$ be a \pz approximation to $P$, where
each $P_s$ is a clopen set.
A string $\sigma$
is {\em active} at stage $s+1$ if  
it is of length $L_n$ for some $n$ and
$[\sigma]\cap (P_s-Q_s)\neq \emptyset$.
Moreover $\sigma$ of length $L_n$ {\em requires attention} at stage $s+1$ if it is active
at this stage and the $(P_s-Q_s)$-density of $\sigma$ is at most $2^{m_n-\ell_n}$.
At stage $s+1$, we pick the least string of length $<s$ which requires attention (if such exists)
and enumerate $[\sigma]\cap (P_s-Q_s)$ into $Q$. If this enumeration occurred, we say that
the construction acted on string $\sigma$ at stage $s+1$.
This concludes the construction.

First we establish an upper bound on the measure of $Q=\cup_s Q_s$.
Clearly the construction can act on a string at most once. The measure that is added to $Q$
at stage $s+1$ if the construction acts on $\sigma$ of length $L_n$ at this stage, is at most 
$2^{-L_n+m_n-\ell_n}$. Therefore the total measure enumerated into $Q$ throughout the
construction is bounded above by:
\[
\sum_{n} \sum_{\sigma\in 2^{L_n}} 2^{-L_n+m_n-\ell_n}=
\sum_{n} 2^{L_n}\cdot 2^{-L_n+m_n-\ell_n}=
\sum_{n} 2^{m_n-\ell_n} < \mu(P).
\]
It follows that  $P^{\ast}:=P-Q$ is a nonempty \pz class, and
by the construction we have that for every $n$ and every 
string $\sigma$ of length $L_n$, if $[\sigma]\cap P^{\ast}\neq\emptyset$ then
the $P^{\ast}$-density of $\sigma$ is at least $2^{m_n-\ell_n}$.
By Lemma \ref{Wc6ZwibuvM} this means that  every $P^{\ast}$-extendible
string of length $L_n$ for some $n$  has at least $2^{m_n}$ many $P^{\ast}$-extendible
extensions of length $L_n+m_n-(m_n-\ell_n)=L_{n+1}$. Hence $P^{\ast}$ has the extension property 
with respect to $(m_i)$, $(\ell_i)$.
\end{proof}

\begin{coro}[General block coding]\label{NrfxnwPe}
Suppose that $P$ is a \pz class, and $(m_i)$, $(\ell_i)$ are 
computable sequences of positive integers. If 
$\sum_{i} 2^{m_i-\ell_i}<\mu(P)$
 then  every sequence is computable from a real in $P$ with use $L_{s+1}$
for bits in $[M_s, M_{s+1})$.
\end{coro}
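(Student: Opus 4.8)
The plan is to obtain this corollary by simply chaining the two preceding lemmas, so that essentially no new work is required. The hypothesis $\sum_{i} 2^{m_i-\ell_i}<\mu(P)$ is precisely what Lemma~\ref{cn1lvqEMtb} needs, and its conclusion feeds directly into the hypothesis of Lemma~\ref{NrfxnwPq}.

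First I would invoke Lemma~\ref{cn1lvqEMtb}: since $P$ is a \pz class and $(m_i),(\ell_i)$ are computable sequences of positive integers with $\sum_{i} 2^{m_i-\ell_i}<\mu(P)$, there is a \pz class $P^{\ast}\subseteq P$ that has the extension property with respect to $(m_i),(\ell_i)$. Next I would apply Lemma~\ref{NrfxnwPq} to $P^{\ast}$ in place of $P$: because $P^{\ast}$ is a \pz class with the extension property with respect to the computable sequences $(m_i),(\ell_i)$, every sequence is computable from a real in $P^{\ast}$, via the functional $\Phi$ constructed there, with oracle-use $L_{s+1}$ for the bits in $[M_s,M_{s+1})$. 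Finally, since $P^{\ast}\subseteq P$, any such real lies in $P$, which gives exactly the statement of the corollary.

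There is no real obstacle here; the only thing to check is that the hypotheses match up verbatim, in particular that the measure-theoretic bound $\sum_{i} 2^{m_i-\ell_i}<\mu(P)$ used to produce $P^{\ast}$ is strong enough that $P^{\ast}$ is nonempty (which is guaranteed by the measure computation inside the proof of Lemma~\ref{cn1lvqEMtb}) and that $P^{\ast}$ inherits being a \pz class (also established there). Once those are in hand, the corollary is immediate, and in particular it specializes the modular machinery of the previous section to a single clean sufficient condition on the parameter sequences.
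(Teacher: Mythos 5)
Your proof is correct and is essentially identical to the paper's own argument: both obtain $P^{\ast}\subseteq P$ with the extension property via Lemma~\ref{cn1lvqEMtb} and then apply Lemma~\ref{NrfxnwPq} to $P^{\ast}$, noting $P^{\ast}\subseteq P$. Nothing further is needed.
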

\begin{proof}
By Lemma \ref{cn1lvqEMtb}
we can consider a \pz class $P^{\ast}\subseteq P$
which has the extension property 
with respect to $(m_i)$, $(\ell_i)$. The statement then follows by Lemma \ref{NrfxnwPq}
and the fact that $P^{\ast}\subseteq P$.
\end{proof}

Note that, while Corollary \ref{NrfxnwPe} seems to require (a) $\sum_{i} 2^{m_i-\ell_i}<\mu(P)$, if $P$ is of positive measure then the condition (b) $\sum_{i} 2^{m_i-\ell_i}<\infty$ suffices to ensure that  $\sum_{i\geq d} 2^{m_i-\ell_i}<\mu(P)$ for some $d$ -- meaning that (b) is sufficient to give the existence of the required functional (albeit with some added non-uniformity required in specifying the index of the reduction).

\subsection{The oracle-use in the general \texorpdfstring{\KG}{KG} coding argument}

Recall that if $X$ can be computed from $Y$ with  the use function on argument $n$ bounded by $n+g(n)$, then we say that $X$ can be computed from $Y$ with \emph{redundancy} $g(n)$. 
Note that in the following corollary we do not need to require that $h,h_r$ are computable.
\begin{coro}\label{AKkRF5WHu}
Suppose $(m_i)$, $(\ell_i)$ are 
computable sequences of positive integers with  
$\sum_{i} 2^{m_i-\ell_i}<1$ and suppose 
$h,h_r$ are nondecreasing functions such that: 
\[
\sum_{i\leq s} \ell_i \leq h\left(1+\sum_{i<s} m_i\right)
\hspace{0.5cm}\textrm{and}\hspace{0.5cm}
m_s+\sum_{i\leq s} (\ell_i-m_i)\leq h_r\left(\sum_{i<s} m_i\right).
\]
Then if $P$ is a $\Pi^0_1$ class of positive measure,  any sequence is computable from a real in $P$ with oracle-use $h$ and redundancy $h_r$.
\end{coro}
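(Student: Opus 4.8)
The plan is to deduce the corollary from Corollary~\ref{NrfxnwPe} by passing to a tail of the sequences $(m_i),(\ell_i)$, prepending a finite non-uniform piece of information, and then matching the resulting oracle-use against the two displayed inequalities. Note in advance that computability of $h,h_r$ will play no role; only their monotonicity and the two inequalities are needed.

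Corollary~\ref{NrfxnwPe} asks for $\sum_i 2^{m_i-\ell_i}<\mu(P)$, whereas we are only given $\sum_i 2^{m_i-\ell_i}<1$. Since the series converges and $\mu(P)>0$, I would first fix a natural number $d$ with $\sum_{i\ge d}2^{m_i-\ell_i}<\mu(P)$; this $d$ depends only on $P$ and the two sequences. The shifted sequences $m_i':=m_{i+d}$ and $\ell_i':=\ell_{i+d}$ are computable and satisfy $\sum_i 2^{m_i'-\ell_i'}<\mu(P)$, so Corollary~\ref{NrfxnwPe} applied to them yields: every sequence is computable from a real in $P$ with use $L_{s+1}'$ on bits in $[M_s',M_{s+1}')$, where $M_s'=\sum_{i<s}m_i'=M_{d+s}-M_d$ and $L_s'=L_{d+s}-L_d$. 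Equivalently, from such a real the first $k$ bits of the coded sequence are computed using $L_{d+s+1}-L_d$ oracle bits whenever $k\in(M_{d+s}-M_d,\,M_{d+s+1}-M_d]$.

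Now, given an arbitrary sequence $X$, put $u:=X\restr_{M_d}$ and let $Z$ be the sequence with $Z(k)=X(M_d+k)$. Choose $Y\in P$ from which $Z$ is computed as above, and let $\Psi_u$ be the functional that outputs $u$ on its first $M_d$ output bits (reading nothing from the oracle) and thereafter emulates this reduction; then $\Psi_u(Y)=X$, and the oracle-use of $\Psi_u$ for computing $X\restr_n$ is $0$ when $n\le M_d$ and $L_{d+s+1}-L_d$ when $n\in(M_{d+s},M_{d+s+1}]$. The index of $\Psi_u$ depends on $u$, but $u$ ranges over the finite set $2^{M_d}$ with $d$ fixed; this is precisely the ``added non-uniformity in specifying the index of the reduction'' anticipated after Corollary~\ref{NrfxnwPe}.

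It remains to check the two bounds for $n\in(M_{d+s},M_{d+s+1}]$ (the cases $n\le M_d$, where the use is $0$, being trivial). Using $L_d\ge 0$ and the first hypothesis with $s$ replaced by $d+s$, the use is $L_{d+s+1}-L_d\le L_{d+s+1}=\sum_{i\le d+s}\ell_i\le h(1+M_{d+s})=h(M_{d+s}+1)\le h(n)$, the last step since $h$ is nondecreasing and $n\ge M_{d+s}+1$. For the redundancy, the telescoping identity $m_{d+s}+\sum_{i\le d+s}(\ell_i-m_i)=L_{d+s+1}-M_{d+s}$ turns the second hypothesis into $L_{d+s+1}-M_{d+s}\le h_r(M_{d+s})$, so the use minus $n$ is at most $L_{d+s+1}-(M_{d+s}+1)<L_{d+s+1}-M_{d+s}\le h_r(M_{d+s})\le h_r(n)$. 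This exhibits $X$ as computable from a real in $P$ with oracle-use $h$ and redundancy $h_r$. The part requiring most care is this final bookkeeping: making the ``$1+$'' in the $h$-inequality and the telescoping in the $h_r$-inequality fit the half-open block $(M_{d+s},M_{d+s+1}]$ on which the use is constant; everything else is a direct appeal to Corollary~\ref{NrfxnwPe} together with monotonicity of $h$ and $h_r$.
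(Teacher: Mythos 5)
Your proposal is correct and takes essentially the same route as the paper: reduce to Corollary~\ref{NrfxnwPe} and then verify the two displayed inequalities using the telescoping identity and the monotonicity of $h$ and $h_r$. The only difference is that you spell out explicitly the tail-shift to a $d$ with $\sum_{i\geq d}2^{m_i-\ell_i}<\mu(P)$ and the resulting finitely non-uniform functional, which the paper delegates to the remark immediately following Corollary~\ref{NrfxnwPe}; this is a welcome level of care, not a departure.
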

\begin{proof}
The first claim follows directly from Corollary \ref{NrfxnwPe} and for the second, 
recall that in the same corollary, for each $s$ and each
$n\in [M_s, M_{s+1})$, the length of the initial segment of $Y$ that is used for the 
computation of $X\restr_n$ is at most
\[
L_{s+1}= M_{s}+m_s+\sum_{i\leq s} (\ell_i-m_i)\leq
n+m_s+\sum_{i\leq s} (\ell_i-m_i)\leq
n+h_r(M_s)\leq n+h_r(n)
\] 
where the second inequality was obtained from the main property assumed for $h_r$, 
and the last inequality follows from the monotonicity of $h_r$.
\end{proof}

Without yet specifying the sequences 
$(m_i)$, $(\ell_i)$, the condition $\sum_{i} 2^{m_i-\ell_i}<1$ means
that a near-optimal choice for the sequence $(\ell_i-m_i)$ is $\ceil{2\log (i+2)}$.
This means that $\sum_{i} (\ell_i-m_i)$ will be of the order $\log (n!)$ or $n\log n$.
We may now consider an appropriate choice for the sequence $(m_i)$, which 
roughly minimizes the redundancy established in Corollary \ref{AKkRF5WHu}.
For Ku\v{c}era's coding we have $m_i=1$ for all $i$ which means
that the redundancy in this type of bit-by-bit coding is 
$n\log n$ (modulo a constant).
G\'{a}cs chose the sequence
$m_i=i+1$, and the reader may verify that this growth-rate of the blocks of the coded stream
gives a near-optimal redundancy in Corollary \ref{AKkRF5WHu}.\footnote{For example the choices 
$m_i=(i+1)^2$ or $m_i=\sqrt{i+1}$ produce redundancy considerably above  G\'{a}cs'
$\sqrt{n}\cdot\log n$ upper bound.}
In this case the function $h_r(n)=\sqrt{n}\cdot \log n$ satisfies
the second displayed inequality of Corollary \ref{AKkRF5WHu} (for almost all $n$), since
$n+1 +n\log n \leq \sqrt{(n+1)n/2}\cdot \log ((n+1)n/2)$
for  almost $n$.
Hence every real is computable from a \ml random real with this redundancy, much like G\'{a}cs had observed. 

We can now intuitively understand 
how the redundancy upper bounds $n\log n$ and $\sqrt{n}\cdot \log n$, of 
Ku\v{c}era
and G\'{a}cs respectively,  are produced. 
The argument of Section \ref{n3ozZCJarz} describes a coding process where
in $n$ coding steps we code $M_n$ many bits of $X$ into $L_n$ many bits of $Y$.
The parameter $g(i):=\ell_i-m_i$ can be seen as an {\em overhead} of the $i$th coding step,
\ie the number of additional bits we use in $Y$ in order to code the next $m_i$ bits of $X$.
Moreover, Corollary \ref{AKkRF5WHu} says that these overheads are accumulated along the coding steps
and push the redundancy of the computation to become larger over time.
In particular, the number $\sum_{i<n} g(i)$ is the redundancy (total overhead accumulated) corresponding to
$n$ coding steps. Due to the condition 
$\sum_i 2^{-g(i)}<1$ in Corollary \ref{AKkRF5WHu}
a representative choice for $g$ is $2\log (n+1)$, which means that 
$\sum_{i<n} g(i)$ needs to be of the order
$\log (n!)$ or (by Stirling's formula) $n\log n$. 

In the case of Ku\v{c}era's argument, $n$ bits of $X$ are coded in $n$ coding steps, so 
the redundancy for the computation of $n$ bits of $X$ from $Y$ following 
Ku\v{c}era's argument is of the order $n\log n$. If we are free to choose $(m_i)$, note that
a fast-growing choice will make the accumulated overhead smaller (since the coding steps for any initial segment of $X$
become less) but a different type of overhead, namely the parameter $m_s$ in the second inequality of Corollary \ref{AKkRF5WHu},
pushes the redundancy higher. G\'{a}cs' choice of $m_i=i+1$ means that in $n$ coding steps there are
$\sum_{i\leq n} m_i \approx n^2$ many bits of $X$ coded into $Y$. Hence the coding of $X\restr_n$ requires roughly
$\sqrt{n}$ coding steps, which accumulate a total of $\sqrt{n}\cdot \log \sqrt{n} \approx \sqrt{n}\cdot \log n$ in overheads
according to the previous discussion. For this reason, G\'{a}cs' redundancy is of the order $\sqrt{n}\cdot \log n$. 
We may observe that
in  G\'{a}cs' coding, the length of the next coding block $m_{n+1}$ is both:
\begin{enumerate}[(a)]
\item  the number of coding steps performed so far;
\item roughly equal to the accumulated overhead from the coding steps
performed so far.
\end{enumerate}

\subsection{Some limits of the \texorpdfstring{\KG}{KG} method}\label{eKNwQobQTs}
 
 In this section we will frequently identify a set of finite strings $V$ with the $\Sigma^0_1$ class specified by $V$, i.e.\ the set of infinite sequences extending elements of $V$. In the following proof we use the notation
$\mu_{\sigma}(C)$ for a string $\sigma$ and a set of reals $C$, which is the measure of $C$ relative
to $[\sigma]$. More precisely  $\mu_{\sigma}(C)=\mu(C\cup [\sigma])\cdot 2^{|\sigma|}$.

\begin{lem}\label{iYnW9wWoGI}
Let $P$ be a \pz class, $g$ a computable function taking positive values, such that $\sum_i 2^{-g(i)}=\infty$. Let $(n_i)$ be a computable sequence such that $n_{i+1}> n_i+g(i)$ for all $i$.
If 
\[
\parbox{13cm}{$(U_i)$ is a uniformly \ce sequence with $U_i\subseteq 2^i$ and  $\mu (P\cap U_i)<2^{-i}$ for all $i$}
\]
then every  \ml random real $X\in \cap_i (P\cap U_i)$ has a prefix in some in $U_{n_t}$ with 
$P$-density at most $2^{-g(t)}$.
\end{lem}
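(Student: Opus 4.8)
The plan is to argue by a counting/measure contradiction. Fix a \ml random $X \in \bigcap_i (P \cap U_i)$ and suppose, for contradiction, that for \emph{every} $t$, every prefix of $X$ lying in $U_{n_t}$ has $P$-density strictly greater than $2^{-g(t)}$. Since $X \in U_{n_t}$ for each $t$, the string $X\restr_{n_t}$ is such a prefix, so this assumption says precisely that $\mu_{X\restr_{n_t}}(P) > 2^{-g(t)}$ for all $t$. The idea is that this lower bound on density, holding along the sparse sequence $(n_t)$, forces $X$ to pass \emph{through} the sets $U_i$ in a way that is too ``heavy'' to be compatible with the hypothesis $\mu(P \cap U_i) < 2^{-i}$, once we combine it with the divergence $\sum_t 2^{-g(t)} = \infty$. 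Concretely, I would build from $(U_i)$ and $(n_t)$ a Solovay-type test (a uniformly \ce sequence of open sets whose measures sum to a finite value is not quite what we want -- rather we want a \ml test capturing $X$) and derive that $X$ fails it, contradicting randomness.

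The key steps, in order, are as follows. First, observe that $P \cap U_{n_t}$, intersected with the basic clopen set $[X\restr_{n_t}]$ and normalised, has measure $> 2^{-g(t)}$ by the contradiction hypothesis; but $U_{n_t} \subseteq 2^{n_t}$, so $[X\restr_{n_t}]$ is a \emph{single} cylinder in the partition of Cantor space at level $n_t$, and $P \cap U_{n_t} \cap [X\restr_{n_t}] = P \cap [X\restr_{n_t}]$ exactly when $X\restr_{n_t} \in U_{n_t}$, which holds. So the hypothesis $\mu(P \cap U_{n_t}) < 2^{-n_t}$ combined with the density bound gives information about the \emph{relative} position of $X$ inside $P \cap U_{n_t}$. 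Second -- and this is the heart -- I would use the gap condition $n_{t+1} > n_t + g(t)$ to ``nest'' these cylinders: the cylinder $[X\restr_{n_{t+1}}]$ sits inside $[X\restr_{n_t}]$, and relative to $[X\restr_{n_t}]$ it is a set of measure $2^{n_t - n_{t+1}} < 2^{-g(t)}$. Combining the density lower bounds at successive levels with the relative measures of these nested cylinders, one gets a telescoping/product estimate. Third, I would set up the test: let $V_k := \bigcup_{t \geq k}\{\sigma \in U_{n_t} : \mu_\sigma(P) > 2^{-g(t)}\}$ (as a $\Sigma^0_1$ class, uniformly in $k$, since $P$-density from above is \ce for a \pzn\ class and $U_{n_t}$ is \ce). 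Bound $\mu(V_k)$ using $\mu(P \cap U_{n_t}) < 2^{-n_t}$ together with the density constraint: each $\sigma \in U_{n_t}$ counted in $V_k$ contributes at least $2^{-n_t-g(t)}$ to $\mu(P \cap U_{n_t})$, hence there are fewer than $2^{g(t)}$ such $\sigma$, and their total measure as strings is $< 2^{g(t)} \cdot 2^{-n_t}$; now the gap condition $n_t \geq n_k + \sum_{k \le j < t} g(j) \geq n_k + \sum_{k\le j<t} g(j)$ makes $\sum_{t\geq k} 2^{g(t)-n_t}$ summable and in fact $\to 0$ as $k \to \infty$, so $(V_k)$ is a \ml test. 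By the contradiction hypothesis $X \in V_k$ for all $k$, contradicting \ml randomness of $X$. This yields the lemma.

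The main obstacle I anticipate is making the test bound $\mu(V_k) \to 0$ genuinely come out of the stated hypotheses rather than needing a stronger gap assumption. The only leverage on $n_t$ is $n_{t+1} > n_t + g(t)$, which gives $n_t > n_0 + \sum_{j<t} g(j)$; since $g$ takes positive integer values this at least gives $n_t \geq t$, but to beat the factor $2^{g(t)}$ in the measure estimate we really need $2^{g(t)} \cdot 2^{-n_t}$ to be the term of a convergent series, i.e.\ $n_t - g(t) \to \infty$ fast enough. Here the point $n_t - g(t) > n_{t-1} \geq n_{t-1} - g(t-1) + g(t-1)$, i.e.\ the sequence $n_t - g(t)$ is itself increasing by at least $g(t-1) \ge 1$ each step, so $n_t - g(t) \geq n_0 + (t-1)$ grows at least linearly and $\sum_t 2^{g(t) - n_t} = \sum_t 2^{-(n_t - g(t))}$ converges -- so in fact the gap hypothesis is exactly calibrated to make the Solovay-style argument go through, and the tail sums go to $0$, giving a bona fide \ml test. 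A secondary subtlety is the effectivity of $V_k$: one must check that ``$\mu_\sigma(P) > 2^{-g(t)}$'' is a \sz\ condition, which holds because $\mu_\sigma(P) = \lim_s \mu_\sigma(P_s)$ from above along a \pzn\ approximation $(P_s)$, so the strict inequality $\mu_\sigma(P) > 2^{-g(t)}$ is not obviously \ce -- instead I would replace it throughout by the \ce\ condition ``$\mu_\sigma(P_s) > 2^{-g(t)}$ for some stage'' ... $\le$ ... which is weaker, but note that the lemma's conclusion is about $P$-density \emph{at most} $2^{-g(t)}$, so the contradiction hypothesis is the \emph{negation}, namely density $> 2^{-g(t)}$; and $\mu_\sigma(P) > 2^{-g(t)}$ does imply $\mu_\sigma(P_s) > 2^{-g(t)}$ for all $s$ (since $P \subseteq P_s$), so membership of $X\restr_{n_t}$ in the \ce set $V_k$ is guaranteed, which is the only direction needed. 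That resolves the effectivity concern.
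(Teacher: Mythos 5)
Your plan never actually uses the hypothesis $\sum_i 2^{-g(i)}=\infty$, and that is the tell-tale sign of the main gap. The engine of your argument is the claim that $\sum_{t}2^{g(t)-n_t}$ has effectively vanishing tails, which you derive from the assertion that $n_t-g(t)>n_{t-1}$. But the gap hypothesis $n_{t+1}>n_t+g(t)$ gives $n_t-g(t-1)>n_{t-1}$, not $n_t-g(t)>n_{t-1}$: it bounds the gap below $n_t$ by $g(t-1)$, and nothing bounds $g(t)$ itself. Since $\sum_i 2^{-g(i)}=\infty$ only forces $g$ to be small on a dense set of indices, $g$ can be huge elsewhere: take $g(t)=1$ for even $t$, $g(t)=2^t$ for odd $t$, and $n_{t+1}=n_t+g(t)+1$. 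Then $n_t-g(t)\to-\infty$ along odd $t$, your bound $2^{g(t)-n_t}$ on the $t$-th slice of $V_k$ exceeds $1$, and $(V_k)$ is not a \ml test. Worse, your argument, if sound, would prove the lemma without the divergence hypothesis; but the lemma is false then. Indeed one can choose parameters with $\sum_i2^{-g(i)}<\infty$ that satisfy both the gap condition and your convergence requirement, and Lemma \ref{cn1lvqEMtb} (cf.\ Corollary \ref{EOb4nucsX}) then yields a nonempty \pz class of \ml randoms with the density property, whose leftmost path lies in $\cap_i(P\cap U_i)$ for the relevant $U_i$ and has no prefix of the required low density. In the paper's proof the divergence is precisely what does the work: one builds a \emph{nested} sequence $V_0\supseteq V_1\supseteq\cdots$ by enumerating $U_{n_{t+1}}\cap[\sigma]$ inside each $\sigma\in V_t$ only up to relative measure $1-2^{-g(t)-1}$, so that $\mu(V_{t+1})\leq\prod_{i\leq t}\left(1-2^{-g(i)-1}\right)\to 0$ because $\sum_i 2^{-g(i)}=\infty$; the random $X$ escapes at a maximal level $t$, and since $X\in U_{n_{t+1}}$ the only possible reason is that the cap was reached, giving $\mu_\sigma(U_{n_{t+1}})>1-2^{-g(t)-1}$, which combined with $\mu_\sigma(P\cap U_{n_{t+1}})\leq 2^{n_t-n_{t+1}}\leq 2^{-g(t)-1}$ forces $\mu_\sigma(P)\leq 2^{-g(t)}$.

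There is a second, independent problem with your effectivity repair. The set $\sett{\sigma\in U_{n_t}}{\mu_\sigma(P)>2^{-g(t)}}$ is indeed not \ce, but replacing the condition by ``$\mu_\sigma(P_s)>2^{-g(t)}$ at some stage'' \emph{enlarges} it, and your measure estimate (each member contributes more than $2^{-n_t-g(t)}$ to $\mu(P\cap U_{n_t})$, hence there are fewer than $2^{g(t)}$ of them) applies only to the original set. Strings whose true $P$-density is tiny but whose $P_s$-density was large at an early stage now enter $V_k$ and are not counted, so you retain the direction ``$X$ gets captured'' but lose the measure bound that makes $(V_k)$ a test; both directions are needed. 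This is exactly the difficulty the paper's construction sidesteps by enumerating with an explicit measure cap (the truncation $C^{[\leq r]}$), which keeps the test sets \sz by construction while controlling their measure, at the price of the nested, level-by-level bookkeeping that your more direct union avoids.
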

\begin{proof}
We define a uniform sequence $(V_i)$ of \sz classes such that 
$V_{t}\supseteq V_{t+1}$ for all $t$, inductively as follows.
Let $V_0$ (as a set of finite strings) consist of all the strings of length $n_0$. Assuming that $V_t$ has been defined,
 for each $\sigma\in V_t$ define
\[
V_{t+1}\cap [\sigma]= \left(U_{n_{t+1}}\cap [\sigma]\right)^{\left[\leq 2^{-|\sigma|}\cdot (1-2^{-g(t)-1})\right]},
\]
where for any real $r$ and any \sz class $C$ with an underlying
 computable enumeration $C[s]$ the
expression $C^{[\leq r]}$ denotes the class $C[s_{\ast}]$ where 
$s_{\ast}$ is the largest stage $s$ such that $\mu(C[s])\leq r$ if such a stage exists, 
and $s_{\ast}=\infty$ otherwise (in which case we let $C[\infty]=C$).
Clearly for each $t$ the set $V_t$ consists of strings of length $n_t$.
Then for each $t$ we have $\mu(V_{t+1})\leq (1-2^{-g(t)-1})\cdot \mu(V_{t})$ so
\[
\mu(V_{t+1})\leq \prod_{i=0}^{t} \left(1-2^{-g(i)-1}\right).
\]
By hypothesis, $\sum_i 2^{-g(i)}=\infty$ so $\prod_{i=0}^{\infty} (1-2^{-g(i)-1})=0$.
Since $g$ is computable, there exists a computable increasing sequence $(k_i)$ such that
$\prod_{i=0}^{k_t} (1-2^{-g(i)-1})<2^{-t}$ for all $t>0$.
Hence $(V_{k_i})$ is a \ml test. Now let $X$ be a  \ml random real with $X\in \cap_i (P\cap U_i)$, as in the statement of the lemma. Since $X$ is \ml random,  
$X\notin \cap_i V_{k_i}=\cap_i V_{i}$ and  there exists a maximum $t$
such $X$ has a prefix $\sigma$ in $V_t$. By the maximality of $t$ we have
$X\not\in V_{t+1}$ and since
$X\in U_{n_{t+1}}$ we must have 
$\mu_{\sigma}(U_{n_{t+1}})>1-2^{-g(t)-1}$, because otherwise
a prefix of $X$ would enter $V_{t+1}$.
Also $\mu_{\sigma}(P\cap U_{n_{t+1}})\leq 
2^{|\sigma|}\cdot \mu(P\cap U_{n_{t+1}})\leq 2^{|\sigma|-n_{t+1}}$. 
Since $\sigma\in V_t$, the length of $\sigma$ is $n_t$. Since $n_{t+1}> n_t+g(t)$ we have
$\mu_{\sigma}(P\cap U_{n_{t+1}})\leq 2^{-g(t)-1}$. From the fact that 
\[
\mu_{\sigma}(P)+ \mu_{\sigma}(U_{n_{t+1}})-\mu_{\sigma}(P\cap U_{n_{t+1}})\leq 1
\]
we can deduce that $\mu_{\sigma}(P)\leq 2^{-g(t)}$. Since $\sigma$ is a prefix of $X$ of
length $n_t$, this concludes the proof.
\end{proof}

\begin{coro}
Suppose that  $(m_i), (\ell_i)$
are computable sequences of positive integers with $\sum_i 2^{m_i-\ell_i}=\infty$. Then every
\pz class consisting entirely of \ml random reals, 
which has the density property  with respect to $(m_i), (\ell_i)$,  is empty.
\end{coro}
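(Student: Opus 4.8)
The plan is to reach a contradiction by applying Lemma~\ref{iYnW9wWoGI} to a carefully chosen instantiation of its parameters. Suppose, towards a contradiction, that $P$ is a nonempty \pz class consisting entirely of \ml random reals and having the density property with respect to $(m_i),(\ell_i)$. First I would record the structural facts needed to invoke Lemma~\ref{iYnW9wWoGI}. Since every member of $P$ is \ml random, $P$ has no computable member, so no $P$-extendible string $\sigma$ satisfies $[\sigma]\subseteq P$; hence the density property forces $2^{m_n-\ell_n}<1$, \ie $\ell_n>m_n$, for every $n$. Therefore $g(i):=\ell_i-m_i$ is a computable function taking positive integer values with $\sum_i 2^{-g(i)}=\sum_i 2^{m_i-\ell_i}=\infty$, and $n_i:=L_i=\sum_{j<i}\ell_j$ is a computable sequence satisfying $n_{i+1}-n_i=\ell_i=m_i+g(i)>g(i)$ (using $m_i\geq 1$). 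These are precisely the conditions on $g$ and $(n_i)$ in the hypothesis of Lemma~\ref{iYnW9wWoGI}.

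Next I would construct the uniformly \ce sequence $(U_i)$ with $U_i\subseteq 2^i$ and $\mu(P\cap U_i)<2^{-i}$, together with the \ml random witness in $\bigcap_i(P\cap U_i)$ that the lemma needs, by tracking the leftmost path of $P$. Fix a computable decreasing sequence of clopen sets $(P_s)$ with $\bigcap_s P_s=P$; at stage $s$ let $\sigma_i[s]$ be the leftmost string $\sigma$ of length $i$ with $[\sigma]\cap P_s\neq\emptyset$ (which exists, since $P\neq\emptyset$ forces $P_s\neq\emptyset$), and enumerate $\sigma_i[s]$ into $U_i$. Because $(P_s)$ is decreasing, $\sigma_i[s]$ is lexicographically nondecreasing in $s$, so it reaches a limit $\sigma_i$, which is the leftmost $P$-extendible string of length $i$; for the leftmost member $X$ of $P$ we then have $X\restr_i=\sigma_i$ for all $i$. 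Any $\tau\in U_i$ with $\tau\neq\sigma_i$ satisfies $[\tau]\cap P=\emptyset$ (if $[\tau]\cap P\neq\emptyset$ then $\tau$ is $P_s$-extendible at every stage, which together with the nondecreasing behaviour of $\sigma_i[\cdot]$ forces $\tau=\sigma_i$), so $\mu(P\cap U_i)=\mu(P\cap[\sigma_i])$, and this is $<2^{-i}$ since $[\sigma_i]\not\subseteq P$ (otherwise the computable real $\sigma_i 0^{\infty}$ would lie in $P$). Finally $X\in P$ and $X\restr_i=\sigma_i\in U_i$ for every $i$, so $X\in\bigcap_i(P\cap U_i)$ and $X$ is \ml random.

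With every hypothesis in place, Lemma~\ref{iYnW9wWoGI} applied to $X$ yields a prefix $\sigma$ of $X$ lying in some $U_{n_t}$ whose $P$-density is at most $2^{-g(t)}$. The crucial refinement is that the proof of Lemma~\ref{iYnW9wWoGI} in fact gives a \emph{strict} inequality: the step $\mu_\sigma(U_{n_{t+1}})>1-2^{-g(t)-1}$ there is strict, so one obtains $\mu_\sigma(P)<2^{-g(t)}$, that is, the $P$-density of $\sigma$ is strictly below $2^{-g(t)}=2^{m_t-\ell_t}$. But $\sigma$ has length $n_t=L_t$ and is $P$-extendible, being a prefix of $X\in P$, so the density property forces its $P$-density to be at least $2^{m_t-\ell_t}$ --- a contradiction. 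Hence no such $P$ is nonempty.

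The step I expect to need the most care is the construction and verification of $(U_i)$: one must check that the leftmost-path bookkeeping really does produce a uniformly \ce family with $\mu(P\cap U_i)<2^{-i}$ while keeping a (necessarily \ml random) member of $P$ inside every $U_i$, and one must observe that the bound coming out of Lemma~\ref{iYnW9wWoGI} is strict, since with a non-strict bound the density property is only contradicted in the boundary case (equality at $2^{m_t-\ell_t}$) and the argument stalls.
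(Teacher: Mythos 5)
Your proof is correct and takes essentially the same route as the paper's: apply Lemma~\ref{iYnW9wWoGI} with $n_k=L_k$ and $g(i)=\ell_i-m_i$, take $U_i$ to be built from the (approximations to the) leftmost $P$-extendible string of length $i$, and let $X$ be the leftmost path of $P$, contradicting the density property at the level supplied by the lemma. Your additional checks --- that $g$ is positive, that $\mu(P\cap U_i)<2^{-i}$ holds strictly because $P$ contains no computable real, and that the lemma's proof in fact yields a strict density bound --- are refinements of points the paper glosses over, not a different argument.
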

\begin{proof}
We apply Lemma \ref{iYnW9wWoGI}
with $n_k=L_k=\sum_{i<k} \ell_i$ and $g(i)=\ell_i-m_i$.
First note that $n_{k+1}=n_k+\ell_k>n_k+g(k)$
because $g(k)<\ell_k$, so the hypothesis of 
Lemma \ref{iYnW9wWoGI} for $(n_i)$ holds.
Second, for each $i$ let $\sigma_i^{\ast}$ be the leftmost $P$-extendible string of
length $i$ and let $U_i$ consist of $\sigma_i^{\ast}$ as well as the strings of length $i$
which are lexicographically to the left of $\sigma_i^{\ast}$.  Then $(U_i)$ is uniformly \ce and
$\mu(P\cap U_i)=\mu(P\cap [\sigma_i^{\ast}])\leq 2^{-i}$ for all $i$.
Now suppose that $P$ is non-empty and consider the leftmost path $X$ through $P$. By our assumptions regarding  $P$, the real
$X$ is \ml random, so by Lemma \ref{iYnW9wWoGI}  there exists some $k$ such that 
the $P$-density of $X\restr_{L_k}$ is less than $2^{m_k-\ell_k}$. This means that
there is a $P$-extendible string of length $L_k$ 
with $P$-density below $2^{m_k-\ell_k}$, so $P$ does not have the 
 density property  with respect to $(m_i), (\ell_i)$.
\end{proof}

\begin{coro}[Lower bounds on the density inside a \pz class of random reals]\label{EOb4nucsX}
Let $P$ be a nonempty \pz class consisting entirely of \ml random reals, let $g$ be a computable function,
 and let
$(L_i)$ be an increasing sequence of positive integers such that $L_{t+1}> L_t+g(t)$ for all $t$. 
Then the the following are equivalent:
\begin{enumerate}[\hspace{0.5cm}(a)]
\item For every $i$ the $P$-density of any $P$-extendible string of length $L_i$ is $\Omega(2^{-g(i)})$
\item $\sum_i 2^{-g(i)}<\infty$
\end{enumerate}
where the asymptotic notation $\Omega(2^{-g(i)})$ means $\geq 2^{-g(i)-c}$ for some constant $c$.
\end{coro}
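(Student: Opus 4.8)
The plan is to prove the two implications separately; throughout write $S:=\sum_i 2^{-g(i)}$ and fix a clopen approximation $(P_s)$ to $P$.

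\textbf{(a)$\Rightarrow$(b), by contraposition.} Suppose $S=\infty$. I claim that for every constant $c$ there is a $P$-extendible string of some length $L_i$ with $P$-density below $2^{-g(i)-c}$, refuting (a). Fix $c$, set $d:=1+\lceil c/2\rceil$, and note that since $g\ge 1$ each step $L_{i+1}-L_i$ is at least $2$ (and at least $g(i)+1$), so $L_{i+d}>L_i+g(i)+c$ for all $i$. Partitioning $\Nat$ into the $d$ residue classes modulo $d$, one of them -- say $j_k:=r+kd$ -- carries an infinite part of the divergent series $\sum_i 2^{-g(i)}$; this $(j_k)$ is computable and satisfies $L_{j_{k+1}}>L_{j_k}+g(j_k)+c$ together with $\sum_k 2^{-g(j_k)}=\infty$. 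Now apply Lemma \ref{iYnW9wWoGI} with the computable function $k\mapsto g(j_k)+c$, with $n_k:=L_{j_k}$, and with $U_m$ the leftmost $P$-extendible string of length $m$ together with all length-$m$ strings to its left (this is uniformly c.e.\ and $\mu(P\cap U_m)\le 2^{-m}$). Since $P\neq\emptyset$ and $P\subseteq\mathrm{MLR}$, the leftmost path $X$ of $P$ is Martin-L\"of random and lies in every $P\cap U_m$, so the lemma supplies $t$ such that the prefix $X\restr_{L_{j_t}}$ has $P$-density at most $2^{-g(j_t)-c}<2^{-g(j_t)-(c-1)}$. As $c$ was arbitrary, (a) fails at every level.

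\textbf{(b)$\Rightarrow$(a).} Suppose $S<\infty$. First, $\mu(P)>0$: a null $\Pi^0_1$ class is effectively null, hence disjoint from $\mathrm{MLR}$; consequently every $P$-extendible string has strictly positive $P$-density. For each $c$ form the $\Sigma^0_1$ class $\mathcal V_c:=\bigcup_i\bigcup\{\,[\sigma]\cap P_{s_\sigma}:\sigma\in 2^{L_i},\ \mu([\sigma]\cap P_{s_\sigma})<2^{-L_i-g(i)-c}\,\}$, where $s_\sigma$ is the first witnessing stage. A cylinder count gives $\mu(\mathcal V_c)\le 2^{-c}S$, the $\mathcal V_c$ are nested decreasing, and up to a constant shift of the index $(\mathcal V_c)$ is a Martin-L\"of test, so $\bigcap_c\mathcal V_c$ contains no random; moreover $\mathcal V_c\cap P$ is exactly the set of $X\in P$ having some prefix $X\restr_{L_i}$ of $P$-density below $2^{-g(i)-c}$, and statement (a) is the assertion that $\mathcal V_c\cap P=\emptyset$ for some $c$. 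Assuming (a) fails, for every $c$ there is a $P$-extendible string witnessing thinness at level $c$; if their lengths were bounded, finitely many strings would be witnesses for infinitely many $c$, forcing one of them to have $P$-density $0$ -- impossible by the preceding remark. So the witness lengths are unbounded, and, exploiting that $P$ is a $\Pi^0_1$ class of randoms (so that the removal construction of Lemma \ref{cn1lvqEMtb} can be run on $P$ and a nonempty $\Sigma^0_1$ class is never contained in $\mathrm{MLR}$), one arranges the witnesses along a single branch of $P$, producing $X\in P$ lying in every $\mathcal V_c$ -- hence non-random, a contradiction.

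The step I expect to be the main obstacle is the last one in (b)$\Rightarrow$(a): passing from the branch-by-branch statement ``for every $c$ there is \emph{some} thin $P$-extendible string'' to the \emph{uniform} conclusion with a single constant $c$ -- equivalently, upgrading the pointwise density bound, which drops out immediately from the test $\mathcal V_c$, to a uniform one. Plain compactness of $P$ does not suffice (a lower-semicomputable, everywhere-finite function on an effectively compact set can still be unbounded), so one genuinely needs the interplay of $S<\infty$ with the $\Pi^0_1$-structure of $P$ and the absence of nonempty $\Sigma^0_1$ subclasses of $\mathrm{MLR}$: the likely route is to run the construction of Lemma \ref{cn1lvqEMtb} on $P$ itself with overhead $g(i)+c$ and show that the $\Sigma^0_1$ set it removes leaves $P$ unchanged, so that $P$ inherits the density property directly. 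Making this precise is the technical heart of the argument.
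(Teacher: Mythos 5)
Your direction (a)$\Rightarrow$(b), argued in contrapositive form, is correct and is essentially the route the paper intends: it derives this implication from Lemma \ref{iYnW9wWoGI} using exactly the sets $U_m$ you choose (the leftmost $P$-extendible string of length $m$ together with everything to its left) and the leftmost path of $P$. Your device of passing to a residue class $(j_k)$ of the indices so that $L_{j_{k+1}}>L_{j_k}+g(j_k)+c$ while $\sum_k 2^{-g(j_k)}$ still diverges is a clean way to absorb the constant $c$, a point the paper glosses over. (You do quietly assume $g\geq 1$ and that $(L_i)$ is computable; both are needed to invoke Lemma \ref{iYnW9wWoGI} and are only implicit in the corollary's statement.)

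Direction (b)$\Rightarrow$(a) is where the proposal is incomplete, and you have correctly located the missing step yourself. Everything up to and including the construction of $\mathcal V_c$, the bound $\mu(\mathcal V_c)\leq 2^{-c}S$, the nestedness, the equivalence of (a) with the assertion that $\mathcal V_c\cap P=\emptyset$ for some $c$, and the disposal of the bounded-length case is fine. But the final step --- ``one arranges the witnesses along a single branch of $P$, producing $X\in P$ lying in every $\mathcal V_c$'' --- is not an argument: the witnesses $\sigma_c$ for different $c$ need not be pairwise comparable, so they cannot in general be placed on one branch. A compactness/K\"onig's lemma argument applied to the prefixes of the $\sigma_c$ does produce some $X\in P$ every prefix of which \emph{extends to} a $c$-bad string for arbitrarily large $c$, but this does not put $X$ into any $\mathcal V_c$, since membership in $\mathcal V_c$ requires a prefix of $X$ itself to be bad; and plain compactness fails because $P\cap\mathcal V_c$ is the intersection of a closed set with an open one. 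Your alternative suggestion --- run the construction of Lemma \ref{cn1lvqEMtb} on $P$ with overhead $g(i)+c$ and show that the removed $\Sigma^0_1$ set $Q$ misses $P$ --- faces the same obstruction: that lemma only guarantees $\mu(Q)<\mu(P)$, hence that $P^{\ast}=P-Q$ is nonempty and has the density property; it says nothing about $P$-extendible strings that fail to be $P^{\ast}$-extendible, and establishing $Q\cap P=\emptyset$ for a single $c$ is precisely the uniformity you are trying to prove. Since the paper states this corollary without proof and its Lemma \ref{cn1lvqEMtb} likewise only delivers the density bound on a subclass of $P$, the step you flag is genuinely the entire content of (b)$\Rightarrow$(a); as written, your proposal does not establish it.
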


\section{Coding into randoms without density assumptions}
In \cite{optcod} a new coding method was introduced which allows for coding every real into a \ml
random real with optimal, logarithmic redundancy. 
We call this method {\em density-free coding} as it does not rely on density assumptions
inside \pz classes, which is also the reason why it gives an exponentially better redundancy upper bound.

\begin{lem}[Density-free coding, from \cite{optcod}]\label{Dedk3zkhlL}
Let $(u_i)$ be a nondecreasing computable sequence and let $\PP$ be a \pz class.
If $\sum_i 2^{i-u_i}<\mu(\PP)$ then every binary stream is uniformly computable from
some member of $\PP$ with oracle-use $(u_i)$.
\end{lem}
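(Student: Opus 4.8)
The plan is to build a single $\Sigma^0_1$ set $Q \subseteq 2^{<\omega}$ by a stage-by-stage construction, remove it from $\PP$, and show that the remaining $\Pi^0_1$ class $\PP^{\ast} = \PP \setminus [Q]$ is nonempty and supports a Turing functional $\Phi$ with $\Phi(Y) = X$ for every binary stream $X$, where $\Phi$ queries $Y$ only up to position $u_n$ in order to output the first $n$ bits of $X$. The key conceptual shift from the density-based argument (Lemma \ref{cn1lvqEMtb} plus Lemma \ref{NrfxnwPq}) is that we do not try to guarantee many extendible extensions of every extendible node; instead, at each level $L_i$ we only need that each extendible string of length $L_i$ (equivalently, each string we have \emph{committed to} as a possible value of $Y\restr_{L_i}$) has at least $2^{m_i}$ extendible extensions of the \emph{next} committed length $L_{i+1}$, but we are free to choose which strings to commit to dynamically, discarding low-measure branches into $Q$ as needed. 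Writing $u_i$ directly as the oracle-use, the natural parametrization is $L_i = u_{M_i}$ for a block decomposition with $M_i = \sum_{j<i} m_j$; but in fact it is cleaner to work level-by-level: for each $n$ we want every "live" string of length $u_n$ to have at least $2$ live extensions of length $u_{n+1}$ (coding one bit at a time at the granularity of the sequence $(u_n)$), and the condition $\sum_i 2^{i-u_i} < \mu(\PP)$ is exactly what bounds the total measure we are forced to throw away.

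First I would set up the bookkeeping. Fix a computable approximation $(\PP_s)$ to $\PP$ by clopen sets with $\PP_s \supseteq \PP_{s+1}$. For each $n$ maintain a finite set $S_n[s]$ of strings of length $u_n$ — the "active" or "committed" strings at level $n$ at stage $s$ — with $S_0[s] = \{\text{root block of length } u_0\}$ always, and with the invariant that every $\sigma \in S_n[s]$ satisfies $[\sigma] \cap (\PP_s \setminus Q_s) \neq \emptyset$ and that every $\sigma \in S_n[s]$ has a predecessor in $S_{n-1}[s]$. The construction, at each stage, looks for the least $n$ and least $\sigma \in S_n[s]$ such that $\sigma$ has fewer than $2$ extensions in $S_{n+1}[s]$ that are still $(\PP_s \setminus Q_s)$-extendible; when found, it either promotes a fresh extendible extension into $S_{n+1}$ (if the measure of $[\sigma] \cap (\PP_s\setminus Q_s)$ is still large enough to afford one) or, if $[\sigma]\cap(\PP_s\setminus Q_s)$ has measure below the threshold $2^{u_n - u_{n+1}}\cdot 2^{-u_{n+1}}$ — rescaled appropriately — it dumps $[\sigma]\cap(\PP_s\setminus Q_s)$ into $Q$ and removes $\sigma$ from $S_n$ (propagating the removal downward). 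The threshold is chosen so that the measure dumped on behalf of a single string $\sigma$ of length $u_n$ is at most $2^{-u_n}\cdot 2^{u_n-u_{n+1}} = 2^{-u_{n+1}}$ — wait, this needs care — the correct accounting is: summing over all $n$ and all strings of length $u_n$, the dumped measure is at most $\sum_n 2^{u_n}\cdot(\text{per-string budget})$, and we want this $< \mu(\PP)$, which forces the per-string budget to be roughly $2^{-u_n}\cdot 2^{n - u_n}$ so that $\sum_n 2^{u_n}\cdot 2^{-u_n}\cdot 2^{n-u_n} = \sum_n 2^{n-u_n} < \mu(\PP)$. So the threshold below which we give up on a length-$u_n$ string is $\PP$-density roughly $2^{n-u_n}$.

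The main obstacle — and the crux of why this beats the density method — is verifying that the construction \emph{terminates its demands} at each level, i.e. that $S_n[s]$ reaches a limit $S_n$ with the property that every $\sigma \in S_n$ has at least $2$ (hence, iterating, at least $2^{n}$-many, or in block language $2^{M_n}$-many) extensions in the limit sets that are genuinely $\PP^{\ast}$-extendible, rather than merely $(\PP_s\setminus Q_s)$-extendible for each $s$. This is the familiar "finite-injury-style" wrinkle: a string can be promoted into $S_{n+1}$ and later found non-extendible (because $\PP_s$ shrank) and removed, possibly infinitely often in principle — so one must argue, by induction on $n$ and using the measure bound, that each $\sigma$ of length $u_n$ that survives into $\PP^{\ast}$ only needs finitely many promotion/removal cycles before stabilizing, because every removal of a length-$u_{n+1}$ extension permanently consumes a quantum of $\mu([\sigma]\cap \PP^{\ast})$-sized measure or reveals genuine non-membership in $\PP$, and there is only finitely much of either to go around below $[\sigma]$. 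Once stabilization is established, the functional $\Phi$ is defined exactly as in the proof of Lemma \ref{NrfxnwPq}: given oracle $Y$, having committed to $Y\restr_{u_n} \in S_n$, search for the two (or, in the block version, $2^{m_n}$) live extensions in $S_{n+1}$, read off which one is a prefix of $Y$, and output the corresponding next bit(s) of $X$; surjectivity onto Cantor space follows by the same downward induction, building $Y$ one committed block at a time, and the oracle-use to output the first $n$ bits is then $u_n$ by construction. I would present the block version ($m_i$ arbitrary, $L_i = u_{M_i}$, hypothesis $\sum_i 2^{i-u_i} < \mu(\PP)$ becoming $\sum_i 2^{M_i - u_{M_i}}\cdot(\text{something}) $) only if the one-bit-at-the-$(u_n)$-granularity version does not already give the clean statement; in fact the stated hypothesis $\sum_i 2^{i - u_i} < \mu(\PP)$ strongly suggests the intended proof codes at unit granularity in the index $i$, so I would run the construction exactly as above with "$n$" ranging over all natural numbers and "one bit per level."
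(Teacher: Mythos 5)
Your construction is, at bottom, the Ku\v{c}era--G\'{a}cs density argument run at granularity $(u_n)$, and that is precisely the approach which the hypothesis of this lemma is too weak to support. The invariant you aim to maintain --- every committed string of length $u_n$ keeps at least two committed, extendible extensions of length $u_{n+1}$ --- is the extension property of Definition \ref{kvWs5nRqzo} with $m_i=1$ and $\ell_i=u_{i+1}-u_i$, and its cost is governed by $\sum_n 2^{1-(u_{n+1}-u_n)}$, not by $\sum_n 2^{n-u_n}$ (Table \ref{1pDc2J1qXg}). Your own accounting exposes the mismatch: dumping a length-$u_n$ string when its relative measure drops below roughly $2^{n-u_n}$ does keep the total discarded measure below $\mu(\PP)$, but a surviving string is then only guaranteed $(\PP\setminus Q)$-density $2^{n-u_n}$, and by Lemma \ref{Wc6ZwibuvM} this yields two extendible extensions only at length $u_n+(u_n-n)+1$, not at $u_{n+1}$. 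For the use functions this lemma is meant for, e.g.\ $u_n=n+2\log n$, the gaps $u_{n+1}-u_n$ stay bounded while $u_n-n\to\infty$, so the two-extensions-per-level step simply does not follow; worse, Section \ref{eKNwQobQTs} (Lemma \ref{iYnW9wWoGI} and its corollaries) shows that the density property which would sustain your invariant must fail in every nonempty \pz class consisting of \ml random reals once $\sum_n 2^{1-(u_{n+1}-u_n)}=\infty$, which is the case here. So the trouble is not in the stabilization/injury details you postponed; the target invariant itself --- a code-tree isomorphic to the full binary tree with levels at the lengths $u_n$ --- is unattainable at this oracle-use.

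What the proof in \cite{optcod} does differently, and what your proposal is missing, is to abandon the demand that the node coding $\sigma$ must itself have extensions coding both $\sigma 0$ and $\sigma 1$. The functional is built as a labelling of the extendible strings satisfying properties (1)--(5) of Section 3.1, in which the same label $x_{\sigma}$ may occur at many incomparable nodes and the labels $x_{\sigma 0}$, $x_{\sigma 1}$ may be placed above \emph{different} copies of $x_{\sigma}$; the code-tree is then only splice-reducible to the full binary tree (the characterization in Section 3.2), as the third tree of Figure \ref{LYPdCu3iua} illustrates --- it is fully labelable although beyond the first level no node has two children. Surjectivity onto Cantor space is recovered by compactness: for each $X$ the nodes carrying labels $x_{X\restr_i}$ form an infinite, finitely branching tree, so some member of $\PP$ computes $X$ with use $u_n$. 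The measure bound $\sum_i 2^{i-u_i}<\mu(\PP)$ is exactly what makes such a \emph{full labelling} (with a \pz approximation) possible, whereas it does not make any per-node extension count possible; redistributing duplicated labels in place of maintaining per-node extension counts is the genuinely new combinatorial content of \cite{optcod}, and without it the construction you outline cannot be completed.
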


Note that by letting $P$ be a \pz class of \ml random reals of sufficiently  large measure, 
Lemma \ref{Dedk3zkhlL} shows that every real is computable from a \ml random real with use
$n+2\log n$, \ie with logarithmic redundancy. On the other hand in \cite{bakugaopt} it was shown that
this is optimal, in the sense that if $\sum_i 2^{i-u_i}=\infty$ then there is a real which is not computable
from any \ml random real with use $n\mapsto u_n$. In particular, given a real $\epsilon$, 
redundancy $\epsilon\cdot\log n$ in a computation from a random oracle is possible
for every real if and only if $\epsilon>1$.

We shall not give a proof of Lemma \ref{Dedk3zkhlL}. Instead, 
we will discuss some aspects of this more general coding method, which contrasts the
more restricted \KG coding whose limitations we have already explored.

\subsection{Coding as a labelling task}
Coding every real into a path through a tree $\TT$ in the Cantor space 
involves constructing a Turing functional $\Phi$ which is onto the Cantor space, even
when it is restricted to $\TT$. 
In fact, this is normally done in such a way that there is a subtree $\TTast$ of $\TT$
such that  $\Phi$ is a bijection between $[\TTast]$ and $2^{\omega}$.
In this case we refer to $\TTast$ as the {\em code-tree}.
Suppose we fix $\TT$ and consider constructing a functional for which the use $u_n$ on argument $n$ does not depend upon the oracle. Then the functional $\Phi$ can be
constructed as a partial computable `labelling' of the finite branches of $\TT$. If the label $x_{\sigma}$ is placed on $\tau$, this means that $\Phi$ outputs $\sigma$ when $\tau$ is the oracle. If we also suppose that the use function is strictly increasing, then the labelling might be assumed to satisfy the following conditions:

\begin{enumerate}[\hspace{0.2cm}(1)]
\item 
only strings of lengths $u_i, i\in\Nat$ of $\TT$ can have a label;
\item 
the labels placed on strings of length $u_i$ of $\TT$ 
are of the type $x_{\sigma}$ where $|\sigma|=i$;
\item 
if  label $x_{\sigma}$ exists in $\TT$ then all labels $x_{\rho}$, $\rho\in 2^{\leq |\sigma|}$  exist in $\TT$;
\item 
each string in $\TT$ can have at most one label;
\item 
if  $\rho$ of length $u_k$ in $\TT$ has label $x_{\sigma}$ then
for each $i<k$, $\rho\restr_{u_i}$ has label $x_{\sigma\restr_i}$. 
\end{enumerate}

 The reader may take a minute to view the Ku\~{c}era  coding as detailed in Section \ref{n3ozZCJarz}
 as a labelling satisfying the properties (1)-(5) above.
It is clear that: 
\[
\parbox{13.5cm}{The code-tree $\TTast$ in the Ku\~{c}era  coding is isomorphic to the full binary tree.}
\]
The new coding behind Lemma \ref{Dedk3zkhlL} is also a labelling process, but in this case the code-tree
can be much more complex. 

\subsection{Fully labelable trees}
If $(u_i)$ is an increasing sequence of positive integers,
a  $(u_i)$-tree $T$ 
is a subset of $\{\lambda\}\cup(\cup_i 2^{u_i})$
which contains the
empty string and is downward closed, in the sense that for each $\sigma\in 2^{u_{i+1}}\cap T$,
the string $\sigma\restr_{u_i}$ belongs to $T$.
The elements of a $(u_i)$-tree $T$ are called nodes and the
$t$-level of $T$ consists of the nodes of $T$ of length $u_t$.
The full binary tree of height $k$ is $2^{\leq k}$ ordered by the prefix relation.
Note that a $(u_i)$-tree is a  tree, in the sense that it is a partially ordered 
set (with respect to the prefix relation) in which the predecessors of each member are linearly ordered.
Hence given any $k\in\Nat$, we may talk about  a $(u_i)$-tree being isomorphic to the full binary tree 
of height $k$. 
When we talk about two trees being isomorphic, 
it is in this sense that we shall mean it -- as partially ordered sets. 
A labelling of a $(u_i)$-tree is
a partial map from the nodes of the tree to the set of labels
$\{x_{\sigma}\ |\  \sigma\in 2^{<\omega}\}$ 
which satisfies properties (1)-(5) of the previous section.
A full labelling of a $(u_i)$-tree is a labelling 
$\{x_{\sigma}\ |\  \sigma\in 2^{<\omega}\}$ 
with the property that for every $\sigma$ there exists
a node on the $(u_i)$-tree which has label $x_{\sigma}$.

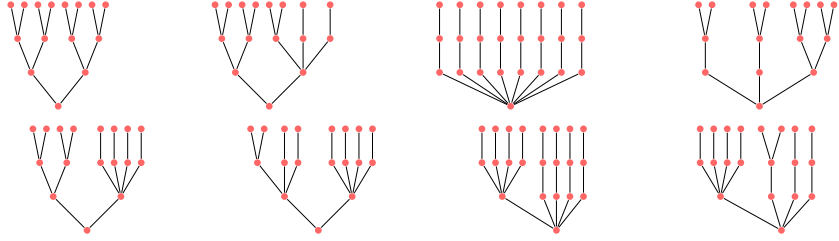
\begin{figure}
  \begin{center}
    \mbox{
      \subfigure{\scalebox{0.9}{
\begin{tikzpicture}[
    grow                    = up,
    level distance          = 0.5cm,
    level 1/.style={sibling distance=0.8cm},
   level 2/.style={sibling distance=0.4cm},
   level 3/.style={sibling distance=0.2cm},
   every node/.style={fill=red!60,circle,inner sep=1pt},
  ]
	\node  {}
		child {
			node{} 
				child {node {} 
						child {node {}}
						child {node {}}}
				child {node {} 
						child {node {}} 
						child {node {}}}
			} 
		child {
			node{} 
				child {node {} 
						child {node {}}
						child {node {}}}
				child {node {} 
						child {node {}} 
						child {node {}}}
			} ;
\end{tikzpicture}
}} \hspace{1cm}
      \subfigure{\scalebox{0.9}{
\begin{tikzpicture}[
    grow                    = up,
    level distance          = 0.5cm,
    level 1/.style={sibling distance=1cm},
   level 2/.style={sibling distance=0.4cm},
   level 3/.style={sibling distance=0.2cm},
   every node/.style={fill=red!60,circle,inner sep=1pt},
  ]
	\node  {}
		child {
			node{} 
				child {node {} 
						child {node {}}}
				child {node {} 
						child {node {}}}
				child {node {} 
						child {node {}} 
						child {node {}}}
			} 
		child {
			node{} 
				child {node {} 
						child {node {}}
						child {node {}}}
				child {node {} 
						child {node {}} 
						child {node {}}}
			} ;
\end{tikzpicture}
}} \hspace{1cm}

      \subfigure{\scalebox{0.9}{
\begin{tikzpicture}[
    grow                    = up,
    level distance          = 0.5cm,
        level 1/.style={sibling distance=0.3cm},
    every node/.style={fill=red!60,circle,inner sep=1pt},
  ]
	\node (is-root) {}
		child {node{} child  {node{} child {node{}}}} 
		child {node{} child {node{} child {node{}}}} 
		child {node{} child {node{} child {node{}}}} 
		child {node{} child {node{} child {node{}}}}
		child {node{} child {node{} child {node{}}}} 
		child {node{} child {node{} child {node{}}}} 
		child {node{} child {node{} child {node{}}}} 
		child {node{} child {node{} child {node{}}}};
\end{tikzpicture}
} }
\hspace{1cm}
      \subfigure{\scalebox{0.9}{
\begin{tikzpicture}[
    grow                    = up,
    level distance          = 0.5cm,
    level 1/.style={sibling distance=0.8cm},
   level 2/.style={sibling distance=0.4cm},
   level 3/.style={sibling distance=0.2cm},
    every node/.style={fill=red!60,circle,inner sep=1pt},
  ]
	\node (is-root) {}
		child {node {}
			child {node {} child {node {}} child {node {}}}
			child {node {} child {node {}} child {node {}}}
			} 
			child {node {} 
				child {node {} child {node {}} child {node {}}}} 
			child {node {}
				child {node {} child {node {}} child {node {}}}} ;
\end{tikzpicture}
} } 
      }
%
    \mbox{
      \subfigure{\scalebox{0.9}{
\begin{tikzpicture}[
    grow                    = up,
    level distance          = 0.5cm,
    level 1/.style={sibling distance=1cm},
   level 2/.style={sibling distance=0.4cm},
   level 3/.style={sibling distance=0.2cm},
   every node/.style={fill=red!60,circle,inner sep=1pt},
  ]
	\node  {}
		child {node{} 
				child[sibling distance=0.2cm] {node {} child {node {}}}
				child[sibling distance=0.2cm] {node {} child {node {}}}			
				child[sibling distance=0.2cm] {node {} child {node {}}}			
				child[sibling distance=0.2cm] {node {} child {node {}}}			
				} 
		child {node{} 
				child {node {} 
						child {node {}}
						child {node {}}}
				child {node {} 
						child {node {}} 
						child {node {}}}
			} ;
\end{tikzpicture}
}} \hspace{1cm}
      \subfigure{\scalebox{0.9}{
\begin{tikzpicture}[
    grow                    = up,
    level distance          = 0.5cm,
    level 1/.style={sibling distance=1cm},
   level 2/.style={sibling distance=0.4cm},
   level 3/.style={sibling distance=0.2cm},
   every node/.style={fill=red!60,circle,inner sep=1pt},
  ]
	\node  {}
		child {node{} 
				child[sibling distance=0.2cm] {node {} child {node {}}}
				child[sibling distance=0.2cm] {node {} child {node {}}}			
				child[sibling distance=0.2cm] {node {} child {node {}}}			
				child[sibling distance=0.2cm] {node {} child {node {}}}			
				} 
		child {node{} 
						child[sibling distance=0.2cm] {node {} child {node {}}}
				child[sibling distance=0.2cm] {node {} child {node {}}}			
				child {node {} 
						child {node {}} 
						child {node {}}}
			} ;
\end{tikzpicture}
}} \hspace{1cm}
      \subfigure{\scalebox{0.9}{
\begin{tikzpicture}[
    grow                    = up,
    level distance          = 0.5cm,
    every node/.style={fill=red!60,circle,inner sep=1pt},
  ]
	\node (is-root) {}
		child[sibling distance=0.2cm] {node{} child  {node{} child {node{}}}} 
		child[sibling distance=0.2cm] {node{} child {node{} child {node{}}}} 
		child[sibling distance=0.2cm] {node{} child {node{} child {node{}}}} 
		child[sibling distance=0.2cm] {node{} child {node{} child {node{}}}}
		child[sibling distance=0.4cm] {node{} 
				child[sibling distance=0.2cm] {node {} child {node {}}}
				child[sibling distance=0.2cm] {node {} child {node {}}}			
				child[sibling distance=0.2cm] {node {} child {node {}}}			
				child[sibling distance=0.2cm] {node {} child {node {}}}			
				} ;
\end{tikzpicture}
} }
\hspace{1cm}
      \subfigure{\scalebox{0.9}{
\begin{tikzpicture}[
    grow                    = up,
    level distance          = 0.5cm,
    every node/.style={fill=red!60,circle,inner sep=1pt},
  ]
	\node (is-root) {}
		child[sibling distance=0.3cm] {node{} child  {node{} child {node{}}}} 
		child[sibling distance=0.4cm] {node{} child {node{} child {node{}}}} 
		child[sibling distance=0.3cm] {node{} child {node{} child {node{}} child {node{}}}} 
		child[sibling distance=0.6cm] {node{} 
				child[sibling distance=0.2cm] {node {} child {node {}}}
				child[sibling distance=0.2cm] {node {} child {node {}}}			
				child[sibling distance=0.2cm] {node {} child {node {}}}			
				child[sibling distance=0.2cm] {node {} child {node {}}}			
				} ;
\end{tikzpicture}
} } 
      }
\end{center}
\caption{Some fully labellable $(u_i)$-trees of height 3.}\label{LYPdCu3iua}
\end{figure}

A $(u_i)$-tree is called {\em fully labelable} if there exists a full labelling of it.
Figure \ref{LYPdCu3iua} illustrates some examples of fully labelable  trees of height 3. Note that
here the nodes are binary strings (hence nodes of the full binary tree) but since
they are  nodes of a $(u_i)$-tree, they can have more than two branches.
Clearly, if $T_0\subseteq T_1$ are $(u_i)$-trees and $T_0$ is fully  labelable, then $T_1$ is also
fully  labelable. These definitions can be easily adapted to finite 
$(u_i)$-trees (where the height is the length of its longest leaf).
 Figure \ref{LYPdCu3iua} shows some examples of fully labelable finite $(u_i)$-trees, while Figure
\ref{weDpXjQClZ} shows some examples of finite $(u_i)$-trees which are not fully labelable.

Clearly any $(u_i)$-tree which is isomorphic to the full binary tree, is 
fully labelable. The success of the  Ku\v{c}era coding was based on this fact, along with the fact
that a \pz class of sufficient measure contains such a canonical tree (subject to the growth of $(u_i)$). 
A similar remark can be made about the slightly more general G\'{a}cs coding.
We have already demonstrated that the density property that guarantees the extension property
cannot be expected to hold if the growth of $(u_i)$ is significantly less than $n+\sqrt{n}\cdot \log n$.
Hence more efficient coding methods, such as the one behind Lemma \ref{Dedk3zkhlL},
need to rely on a wider class of labelable trees. 

Given two trees $T_0, T_1$ (thought of as partially ordered sets), we say that $T_0$ is {\em splice-reducible} to $T_1$ if
we can obtain $T_1$ from $T_0$ via a series of operations on the nodes of $T_0$, each
consisting of splicing two sibling nodes
into one -- i.e.\ the two sibling nodes $u_0$ and $u_1$ are replaced by a single node $u$, for which the set of elements $>u$ is isomorphic to  the set of nodes strictly greater than $u_0$ union the set of nodes strictly greater than  $u_1$. 
The following result points to a concrete difference between Ku\v{c}era coding and the general
coding from  \cite{optcod}: in Ku\v{c}era coding the code-tree is an isomorphic copy of the full binary tree,
while in  \cite{optcod} the code-tree is only splice-reducible to an isomorphic copy of the 
full binary tree.\footnote{A similar remark can be made with respect to the G\'acs coding, only that instead of
binary trees we need to consider a homogeneous trees, in the sense that for each level, every node of that level
has the same number of successors.}

\begin{thm}
Given a $(u_i)$-tree $T$, 
the following are equivalent:
\begin{enumerate}[\hspace{0.5cm}(a)]
\item $T$ is a fully labelable $(u_i)$-tree;
\item $T$ is splice-reducible to an isomorphic copy of the full binary tree.
\end{enumerate}
\end{thm}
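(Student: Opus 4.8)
The plan is to reduce full labelability to a cleaner combinatorial statement and then handle the two implications. I would first record the following reformulation, obtained by unwinding conditions (1)--(5): a $(u_i)$-tree $T$ is fully labelable if and only if there is a downward-closed subtree $S\subseteq T$ together with a surjection $\Lambda\colon S\to 2^{<\omega}$ that is \emph{level-preserving} (the $t$-level of $S$ is mapped into $2^t$) and \emph{predecessor-preserving} (for $v\in S$ of positive level, $\Lambda$ of the predecessor of $v$ equals $\Lambda(v)$ with its last bit deleted). Given a full labelling one takes $S$ to be the set of labelled nodes and reads $\Lambda$ off from the labels: (4) makes $\Lambda$ a well-defined function, (5) makes it predecessor-preserving and forces $S$ to be downward closed, (2) makes it level-preserving, and fullness is exactly surjectivity; conversely, $v\mapsto x_{\Lambda(v)}$ recovers a full labelling, with (3) automatic from surjectivity. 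The point worth stressing is that a labelling is allowed to place the \emph{same} label on several nodes — this is what makes the reformulation, and the whole argument, work.

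For (b)$\Rightarrow$(a): a single splice operation on $T$ determines a surjective map to the new tree which preserves levels and the predecessor relation (splicing $u_0,u_1$ into $u$ sends both $u_0,u_1$ to $u$, is the identity elsewhere, and $u$ inherits the common predecessor of $u_0$ and $u_1$). Hence a splice-reduction of $T$ to a copy $B$ of the full binary tree yields, by composition, a level- and predecessor-preserving surjection $T\to B$, and then, composing with $B\cong 2^{<\omega}$, such a surjection defined on \emph{all} of $T$. By the reformulation (with $S=T$) the tree $T$ is fully labelable. The only point to check in the infinite case is that an $\omega$-length series of splices still yields a well-defined quotient tree and quotient map; I would take this to be built into the definition of splice-reducibility, so it is routine.

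For (a)$\Rightarrow$(b): starting from $S$ and $\Lambda\colon S\to 2^{<\omega}$ as above (note the root lies in $S$), I would first extend $\Lambda$ to \emph{all} of $T$. Processing nodes in order of increasing level, set $\hat\Lambda(v)=\Lambda(v)$ if $v\in S$, and otherwise $\hat\Lambda(v)=\hat\Lambda(p)\,0$, where $p$ is the (already processed) predecessor of $v$; since $S$ is downward closed this is consistent, and $\hat\Lambda\colon T\to 2^{<\omega}$ is again level- and predecessor-preserving and surjective. Now I would splice $T$ together along the fibres of $\hat\Lambda$, one level at a time from the root downwards: once all levels below $t$ have been merged, any two level-$t$ nodes lying in a common fibre of $\hat\Lambda$ have predecessors in a common fibre of $\hat\Lambda$ at level $t-1$, hence the \emph{same} predecessor in the current tree, so they are genuine siblings and may be spliced into one node. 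After every level has been processed, the resulting tree has, at level $t$, exactly one node per element of $2^t$, with predecessor relation and successor sets matching those of $2^{<\omega}$; that is, it is an isomorphic copy of the full binary tree (of the appropriate height in the finite case).

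I expect the crux to be precisely the step just described: the unlabelled (``useless'') nodes of $T$ cannot simply be discarded, since splicing never deletes a non-leaf subtree, so they must instead be \emph{absorbed} into the labelled structure. The greedy extension $\hat\Lambda$ is exactly the device that does this — it hands every node of $T$, useful or not, a binary string compatible with that of its predecessor, after which splicing along fibres collapses the whole tree onto $2^{<\omega}$. Everything else (the reformulation, the elementary properties of a splice, the level-by-level bookkeeping) is routine, modulo the limit remark already noted for the infinite case.
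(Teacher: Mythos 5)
Your proposal is correct and follows essentially the same route as the paper: you first make every node of $T$ carry a label and then splice identically labelled siblings level by level, and for the converse you pull the canonical labelling of the full binary tree back through the splice operations (the paper phrases this as keeping labels persistent while reversing the splices into splits). Your greedy extension $\hat\Lambda$ is a slightly more careful rendering of the paper's first step of "splicing the unlabelled nodes onto labelled ones" (an unlabelled node need not have a labelled sibling, so extending the labelling rather than splicing immediately is the cleaner way to absorb such nodes), but beyond this presentational refinement the two arguments coincide.
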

\begin{proof}
Suppose that $T$ is fully labelable.
We describe how to produce the full binary tree by a repeated application  
of the splice operation between siblings of $T$. Fix a full labelling of $T$ and 
obtain the minimal fully labeled tree $T'$ from $T$ by splicing the unlabelled nodes of $T$ onto
labelled ones. Now all nodes of $T'$ are labelled. Then gradually, starting from the first level
and moving toward the last level of $T'$, splice siblings with identical labels. Inductively,
by the properties of the assumed labelling, the resulting  $(u_i)$-tree is isomorphic to the
full binary tree.

Conversely, assume that $T$ is  splice-reducible to a  $(u_i)$-tree which is isomorphic to the full binary tree.
Then reversing the splice operations behind this reduction, we get a sequence of
node splitting operations that transform an isomorphic  $(u_i)$-copy of the full binary tree into $T$. Since this  $(u_i)$-copy of the full
binary tree has a full labeling, by making these labels persistent during
the series of splitting operations that lead to $T$, we get a full 
labelling of $T$.
\end{proof}

The work in \cite{optcod} shows that if $(u_i)$ is an increasing computable sequence,
then any tree of measure more than $\sum_i 2^{i-u_i}<\mu(\PP)$ has a full labelling.
Moreover, such a labelling has a \pz approximation, given any \pz approximation of $\PP$.
\begin{figure}
  \begin{center}
    \mbox{
      \subfigure{\scalebox{0.9}{
\begin{tikzpicture}[
    grow                    = up,
    level distance          = 0.5cm,
    level 1/.style={sibling distance=1.1cm},
   level 2/.style={sibling distance=0.5cm},
   level 3/.style={sibling distance=0.2cm},
   every node/.style={fill=red!60,circle,inner sep=1pt},
  ]
	\node  {}
		child {
			node{} 
				child {node {}  child {node {}}}
				child {node {} child {node {}} child {node {}} child {node {}}}
			} 
		child {
			node{} 
				child {node {} child {node {}} child {node {}}}
				child {node {} child {node {}} child {node {}}}
			} ;
\end{tikzpicture}
}} 
\hspace{1cm}
      \subfigure{\scalebox{0.9}{
\begin{tikzpicture}[
    grow                    = up,
    level distance          = 0.5cm,
    level 1/.style={sibling distance=1cm},
   level 2/.style={sibling distance=0.5cm},
   level 3/.style={sibling distance=0.2cm},
   every node/.style={fill=red!60,circle,inner sep=1pt},
  ]
	\node  {}
		child {
			node{} child {node {} child {node {}}
			}
			child {node {} child {node {}}
			child {node {}}}
			} 
		child {node{} child {
				node {} child {node {}} child {node {}} child {node {}}}
			child {node {} child {node {}}
			child {node {}}}
			} ;
\end{tikzpicture}
}}
\hspace{1cm}
      \subfigure{\scalebox{0.9}{
\begin{tikzpicture}[
    grow                    = up,
    level distance          = 0.5cm,
    level 1/.style={sibling distance=1.2cm},
   level 2/.style={sibling distance=0.4cm},
   level 3/.style={sibling distance=0.2cm},
   every node/.style={fill=red!60,circle,inner sep=1pt},
  ]
	\node  {}
		child {
			node{} 
				child {node {} 
						child {node {}}}
				child {node {} 
						child {node {}}}
				child {node {} 
						child {node {}}}
				child {node {} 
						child {node {}} 
						child {node {}}}
			} 
		child {
			node{} 
				child {node {} 
						child {node {}}}
				child {node {} 
						child {node {}} 
						child {node {}}}
			} ;
\end{tikzpicture}
}}
\hspace{1cm}
      \subfigure{\scalebox{0.9}{
\begin{tikzpicture}[
    grow                    = up,
    level distance          = 0.5cm,
    level 1/.style={sibling distance=1cm},
   level 2/.style={sibling distance=0.4cm},
   level 3/.style={sibling distance=0.2cm},
   every node/.style={fill=red!60,circle,inner sep=1pt},
  ]
	\node  {}
		child {
			node{} 
				child {node {} 
						child {node {}}}
				child {node {} 
						child {node {}} 
						child {node {}}}
				child {node {} 
						child {node {}} 
						child {node {}}}
			} 
		child {
			node{} 
				child {node {} 
						child {node {}}}
				child {node {} 
						child {node {}} 
						child {node {}}}
			} ;
\end{tikzpicture}
}}
      }
\end{center}
\caption{Some $(u_i)$-trees of length 3 which are not fully labelable.}
\label{weDpXjQClZ}
\end{figure}
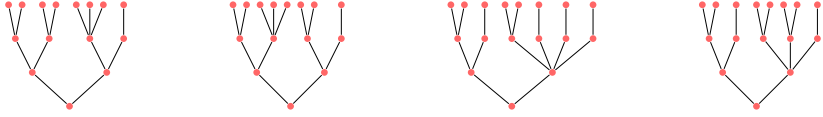

\newcommand{\etalchar}[1]{$^{#1}$}

\end{document}